\newtheorem{theorem}{Theorem}[section]
\newtheorem{lemma}[theorem]{Lemma}
\newtheorem{claim}{Claim}
\newenvironment{proof}{\noindent\emph{Proof.}\hspace{.25em}}{\hspace*{\fill}
$\Box$\newline}
\def\pfsp{\hskip 1em}
\newcommand{\skpf}[1]{\noindent{\it Sketch of Proof\,:}\pfsp #1 {\hfill $\Box$} \smallskip}
\newcommand{\pfcl}[1]{\noindent{\it Proof\,:}\pfsp #1 \QED \smallskip}
\newcommand{\pff}[2]{\noindent{\it Proof~of~#1\,:}\pfsp #2 {\hfill $\Box$}}
\def \QED {\hfill $\triangle$}      % end of proof
\def \QD1 {\hfill $\spadesuit$}
\newcommand{\set}[2]{\{#1 \;|\; #2 \}}
\newcommand{\ems}{\varnothing}
\newcommand{\sm}{\setminus}
\newcommand{\De}{\Delta}
\newcommand{\de}{\delta}
\newcommand{\col}{{\rm col}}
\newcommand{\mad}{{\rm mad}}
\newcommand{\la}{\lambda}
\newcommand{\ka}{\kappa}
\newcommand{\om}{\omega}
\newcommand{\cn}{\chi}
\newcommand{\bG}{\partial_G}
\newcommand{\bH}{\partial_H}
\newcommand{\cH}{{\cal H}}
\newcommand{\cB}{{\cal B}}
\newcommand{\cC}{{\cal C}}
\newcommand{\cP}{{\cal P}}
\newcommand{\cL}{{\cal L}}
\newcommand{\CO}{{\cal CO}}
\numberwithin{equation}{section}
\begin{document}
%\title{\bf Coloring graphs with bounded local edge connectivity}
\title{\bf A Brooks type theorem for the maximum local edge connectivity}

\author{{{Michael Stiebitz}
\thanks{The authors thank the Danish Research Council for support through the program Algodisc.}
\thanks{
Technische Universit\"at Ilmenau, Inst. of Math., PF 100565, D-98684 Ilmenau, Germany. E-mail
address: michael.stiebitz@tu-ilmenau.de}}
\and{{Bjarne Toft}\footnotemark[1]~\thanks{University of Southern Denmark, IMADA, Campusvej 55, DK-5320 Odense M, Denmark E-mail
address: btoft@imada.sdu.dk} }}
\date{}
\maketitle
\begin{abstract}
For a graph $G$, let $\cn(G)$ and $\la(G)$ denote the chromatic number of $G$ and the maximum local edge connectivity of $G$, respectively. A result of Dirac \cite{Dirac53} implies that every graph $G$ satisfies $\cn(G)\leq \la(G)+1$. In this paper we characterize the graphs $G$ for which $\cn(G)=\la(G)+1$. The case $\la(G)=3$ was already solved by Alboulker {\em et al.\,} \cite{AlboukerV2016}. We show that a graph $G$ with $\la(G)=k\geq 4$ satisfies $\cn(G)=k+1$ if and only if $G$ contains a block which can be obtained from copies of $K_{k+1}$ by repeated applications of the Haj\'os join.
\end{abstract}

\noindent{\small{\bf AMS Subject Classification:} 05C15}

\noindent{\small{\bf Keywords:} Graph coloring, Connectivity, Critical graphs, Brooks' theorem.}

\section{Introduction and main result}

The paper deals with the classical vertex coloring problem for graphs. The term graph refers to a finite undirected graph without loops and without multiple edges. The {\em chromatic number} of a graph $G$, denoted by $\cn(G)$, is the least number of colors needed to color the vertices of $G$ such that each vertex receives a color and adjacent vertices receive different colors. There are several degree bounds for the chromatic number. For a graph $G$, let $\de(G)=\min_{v\in V(G)}d_G(v)$ and $\De(G)=\max_{v\in V(G)}d_G(v)$ denote the {\em minimum degree} and the {\em maximum degree} of $G$, respectively. Furthermore, let $$\col(G)=1+\max_{H\subseteq G}\de(H)$$ denote the {\em coloring number} of $G$, and let $$\mad(G)=\max_{\ems\not=H\subseteq G} \frac{2|E(H)|}{|V(H)|}$$ denote the {\em maximum average degree} of $G$. By $H\subseteq G$ we mean that $H$ is a subgraph of $G$. If $G$ is the {\em empty graph}, that is, $V(G)=\ems$, we briefly write $G=\ems$ and define $\de(G)=\De(G)=\mad(G)=0$ and $\col(G)=1$. A simple sequential coloring argument shows that $\cn(G)\leq \col(G)$, which implies  that every graph $G$ satisfies
$$\cn(G)\leq \col(G)\leq \lfloor \mad(G)\rfloor+1\leq \De(G)+1.$$
These inequalities were discussed in a paper by Jensen and Toft \cite{JensenT95}.
Brooks' famous theorem provides a characterization for the class of graphs $G$ satisfying $\cn(G)=\De(G)+1$. Let $k\geq 0$ be an integer. For $k\not=2$, let $\cB_k$ denote the class of complete graphs having order $k+1$; and let $\cB_2$ denote the class of odd cycles. A graph in $\cB_k$ has maximum degree $k$ and chromatic number $k+1$. Brooks' theorem \cite{Brooks41} is as follows.

\begin{theorem} [Brooks 1941]
Let $G$ be a non-empty graph. Then $\cn(G)\leq \De(G)+1$ and equality holds if and only if $G$ has a connected component belonging to the class $\cB_{\De(G)}$.
\label{Th:Brooks}
\end{theorem}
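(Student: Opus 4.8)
The plan is as follows. The inequality $\cn(G)\le \De(G)+1$ is already recorded in the displayed chain $\cn(G)\le\col(G)\le\De(G)+1$, so only the characterization of equality needs an argument. For the ``if'' direction, observe that $K_{k+1}$ is $k$-regular with chromatic number $k+1$ and an odd cycle is $2$-regular with chromatic number $3$; hence a graph having a component in $\cB_{\De(G)}$ satisfies $\cn(G)=\De(G)+1$. For the ``only if'' direction, assume $\cn(G)=\De(G)+1=:k+1$. Passing to a component on which the chromatic number is attained (whose maximum degree is then forced to be $k$, since it is $\le k$ and $\cn\le\De+1$), we may assume $G$ is connected with $\De(G)=k$ and $\cn(G)=k+1$, and it suffices to prove $G\in\cB_k$.

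First I would reduce to a critical, regular graph. Pick a vertex-critical subgraph $H\subseteq G$ with $\cn(H)=k+1$. A standard argument gives $\de(H)\ge \cn(H)-1=k$, while $\De(H)\le\De(G)=k$, so $H$ is $k$-regular; as $\De(G)=k$ too, every vertex of $H$ already has all its $G$-neighbours in $H$, so $H$ is a union of components of $G$, and since both $H$ and $G$ are connected, $G=H$. Thus it suffices to show that a connected $k$-regular vertex-critical graph $H$ with $\cn(H)=k+1$ lies in $\cB_k$. The cases $k\le 2$ are immediate: $k=0$ gives $H=K_1$, $k=1$ gives $H=K_2$, and for $k=2$ a connected $2$-regular graph is a cycle, which is $3$-chromatic exactly when odd. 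So assume $k\ge 3$ and, for contradiction, $H\ne K_{k+1}$; being $k$-regular, $H$ then has at least $k+2$ vertices, so it is not complete, and since a vertex-critical graph with chromatic number at least $3$ has no cut vertex, $H$ is $2$-connected.

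The heart of the proof is to produce a proper $k$-colouring of $H$, contradicting $\cn(H)=k+1$. To this end I would construct an enumeration $v_1,\dots,v_n$ of $V(H)$ with $v_1v_2\notin E(H)$, with $v_1$ and $v_2$ both adjacent to $v_n$, and with every $v_i$ $(i<n)$ having a neighbour $v_j$ with $j>i$. Given such an enumeration one colours greedily from $\{1,\dots,k\}$: give $v_1$ and $v_2$ the colour $1$ (possible as $v_1v_2\notin E(H)$); each $v_i$ with $2<i<n$ has at most $k-1$ already coloured neighbours, so a colour is free; and $v_n$ has $k$ neighbours, two of which ($v_1,v_2$) share colour $1$, so at most $k-1$ colours occur on $N(v_n)$ and a colour is free. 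To build the enumeration I would split on the connectivity of $H$. If $H$ is $3$-connected, then since $H$ is connected and not complete some vertex $v_n$ has two non-adjacent neighbours $v_1,v_2$; then $H-v_1-v_2$ is connected and contains $v_n$, so enumerating its vertices as $v_3,\dots,v_n$ in order of non-increasing distance from $v_n$ in a spanning tree rooted at $v_n$ gives the required property. If $H$ is $2$-connected but not $3$-connected, there is a $2$-cut, hence a vertex $v_n$ for which $H-v_n$ has a cut vertex; in the block--cut-tree of $H-v_n$ I would choose two distinct leaf blocks and, in each, a neighbour of $v_n$ other than that block's cut vertex (such a neighbour must exist, or else that cut vertex would be a cut vertex of $H$), calling these $v_1$ and $v_2$; then $v_1v_2\notin E(H)$, both are adjacent to $v_n$, and $H-v_1-v_2$ is again connected and contains $v_n$, and one finishes as before.

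The main obstacle is precisely this last step, constructing the enumeration when $H$ is $2$-connected but not $3$-connected: one must argue carefully, using the block structure of $H-v_n$ together with the $2$-connectedness of $H$, that deleting the two chosen neighbours $v_1,v_2$ of $v_n$ leaves $H$ connected, so that the spanning-tree ordering applies. By contrast, the reductions to the connected, critical, regular case, the $3$-connected case, and the greedy colouring itself are all routine.
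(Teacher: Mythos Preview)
Your proof is correct: it is the classical Lov\'asz ordering argument for Brooks' theorem, and the details you give (the reduction to a connected, $k$-regular, critical graph; the construction of the ordering in both the $3$-connected and the $2$-connected-but-not-$3$-connected cases; and the greedy colouring) are all sound. In particular, the point you flag as the ``main obstacle'' is fine: in each leaf block $B_i$ of $H-v_n$ the chosen $v_i$ is a non-cutvertex of that block, so $H-v_n-v_1-v_2$ stays connected, and $v_n$ (having degree $k\ge 3$) retains a neighbour in it.

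However, this is not the route the paper takes. The paper does not prove Theorem~\ref{Th:Brooks} directly at all; it quotes it as a classical result and remarks that it ``may easily be deduced'' from the main Theorem~\ref{Th:local}. That deduction would run: if $\cn(G)=\De(G)+1=k+1$ then $\la(G)=k$, so by Theorem~\ref{Th:local} some block $H$ of $G$ lies in $\cH_k$; but every graph in $\cH_k$ other than $K_{k+1}$ (and, for $k=3$, every odd wheel other than $K_4$, and every Haj\'os join) contains a vertex of degree larger than $k$, so $\De(H)\le k$ forces $H\in\cB_k$, and $k$-regularity then makes $H$ a whole component of $G$. Your argument is far more elementary and self-contained; the paper's suggested deduction, by contrast, rests on the full apparatus of Theorems~\ref{Th:Gallai}, \ref{Th:2Conn}, \ref{Th:Toft} and \ref{Th:Ck=Hk} --- and since Theorem~\ref{Th:Gallai} is itself presented as an extension of Brooks' theorem, the paper's route is not really an independent proof. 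What the paper's approach buys is a unified picture in which Brooks' theorem appears as the degenerate case (all vertices low) of the local-edge-connectivity characterization; what your approach buys is a clean, standalone argument requiring nothing beyond basic connectivity and greedy colouring.
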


In this paper we are interested in connectivity parameters of graphs. Let $G$ be a graph with at least two vertices. The {\em local connectivity} $\ka_G(v,w)$ of distinct vertices $v$ and $w$ is the maximum number of internally vertex disjoint $v$-$w$ paths of $G$. The {\em local edge connectivity} $\la_G(v,w)$ of distinct vertices $v$ and $w$ is the maximum number of edge-disjoint $v$-$w$ paths of $G$. The {\em maximum local connectivity} of $G$ is
$$\ka(G)=\max\set{\ka_G(v,w)}{v,w\in V(G), v\not=w},$$
and the {\em maximum local edge connectivity} of $G$ is
$$\la(G)=\max\set{\la_G(v,w)}{v,w\in V(G), v\not=w}.$$
For a graph $G$ having only one vertex, we define $\ka(G)=\la(G)=0$. Clearly, the definition implies that $\ka(G)\leq \la(G)$ for every graph $G$. By a result of Mader \cite{Mader73} it follows that $\de(G)\leq \ka(G)$. Since  $\ka$ is a monotone graph parameter in the sense that $H\subseteq G$ implies $\ka(H)\leq \ka(G)$, it follows that every graph $G$ satisfies $\col(G)\leq \ka(G)+1$. Consequently, every graph $G$ satisfies
\begin{equation}
\label{Equ:lambda}
\cn(G)\leq \col(G) \leq \ka(G)+1\leq \la(G)+1\leq \De(G)+1.
\end{equation}
Our aim is to characterize the class of graphs $G$ for which $\cn(G)=\la(G)+1$. For such a characterization we use the fact that if we have an optimal coloring of each block of a graph $G$, then we can combine these colorings to an optimal coloring of $G$ by permuting colors in the blocks if necessary. For every non-empty graph $G$, we thus have
\begin{equation}
\label{Equ:Block}
\cn(G)=\max\set{\cn(H)}{H \mbox{ is a block of } G}.
\end{equation}
We also need a famous construction, first used by Haj\'os \cite{Hajos61}. Let $G_1$ and $G_2$ be two vertex-disjoint graphs and, for $i=1,2$, let $e_i=v_iw_i$ be an edge of $G_i$. Let $G$ be the graph obtained from $G_1$ and $G_2$ by deleting the  edges $e_1$  and $e_2$ from $G_1$ and $G_2$, respectively, identifying the vertices $v_1$ and $v_2$, and adding the new edge $w_1w_2$. We then say that $G$ is the {\em Haj\'os join} of $G_1$ and $G_2$ and write $G=(G_1,v_1,w_1)\bigtriangleup (G_2,v_2,w_2)$ or briefly $G=G_1 \bigtriangleup G_2$.

For an integer $k\geq 0$ we define a class $\cH_k$ of graphs as follows. If $k\leq 2$, then $\cH_k=\cB_k$. The class $\cH_3$ is the smallest class of graphs that contains all odd wheels and is closed under taking Haj\'os joins. Recall that an {\em odd wheel} is a graph obtained from on odd cycle by adding a new vertex and joining this vertex to all vertices of the cycle. If $k\geq 4$, then $\cH_k$ is the smallest class of graphs that contains all complete graphs of order $k+1$ and is closed under taking Haj\'os joins. Our main result is the following counterpart of Brooks' theorem. In fact, Brooks' theorem may easily be deduced from it.

\begin{theorem}
Let $G$ be a non-empty graph. Then $\cn(G)\leq \la(G)+1$ and equality holds if and only if $G$ has a block belonging to the class $\cH_{\la(G)}$.
\label{Th:local}
\end{theorem}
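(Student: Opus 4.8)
The bound $\cn(G)\le \la(G)+1$ is exactly \eqref{Equ:lambda}. For the ``if'' direction, suppose $G$ has a block $H\in\cH_k$ with $k=\la(G)$. By \eqref{Equ:Block} it suffices to show $\cn(H)\ge k+1$, and I would prove this by induction on the number of Haj\'os joins used to build $H$: the building blocks of $\cH_k$ ($K_{k+1}$, or odd wheels when $k=3$, or members of $\cB_k$ when $k\le 2$) have chromatic number $k+1$, and if $H=H_1\bigtriangleup H_2$ with $\cn(H_1),\cn(H_2)\ge k+1$, then $\cn(H)\ge k+1$ by the classical Haj\'os argument (a $k$-colouring of $H$ colours the new edge $w_1w_2$ properly, hence agrees with the identified vertex on one of the two sides and disagrees on the other, and can therefore be pushed to a $k$-colouring of one of $H_1,H_2$). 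Then $\cn(G)\ge\cn(H)\ge k+1$, and with \eqref{Equ:lambda} we get equality.

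\textbf{Reducing the ``only if'' direction.} Assume $\cn(G)=\la(G)+1=:k+1$. For $k\le 2$ the claim is elementary (if $\la(G)\le 2$ every block of $G$ is an edge or a cycle, so $\cn(G)=3$ forces a block to be an odd cycle, i.e.\ a member of $\cB_2=\cH_2$), and for $k=3$ I would invoke the theorem of Alboulker {\em et al.} \cite{AlboukerV2016}; so assume $k\ge 4$. Pick a block $H$ of $G$ with $\cn(H)=\cn(G)$; since $H\subseteq G$, \eqref{Equ:lambda} forces $\la(H)=k$, and as $\cn(H)=k+1\ge 5$ the block $H$ is $2$-connected, hence equal to its own unique block, so it suffices to show $H\in\cH_k$. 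Next I would show that any such $H$ must be $(k+1)$-critical. Take a $(k+1)$-critical subgraph $H'\subseteq H$; by Dirac's theorem a $(k+1)$-critical graph is $k$-edge-connected, so $\la_{H'}(x,y)\ge k$ for all $x,y\in V(H')$. If $H'\ne H$, then either some edge $xy\in E(H)\setminus E(H')$ joins two vertices of $H'$, whence $\la_H(x,y)\ge\la_{H'}(x,y)+1\ge k+1$; or there is a vertex $u\in V(H)\setminus V(H')$, and $2$-connectivity of $H$ together with the fan lemma yields two paths from $u$ to $V(H')$ meeting $V(H')$ only in distinct endpoints $x_1,x_2$, whose concatenation is an $x_1$--$x_2$ path edge-disjoint from $H'$, so again $\la_H(x_1,x_2)\ge k+1$. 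Either way $\la(H)\ge k+1$, a contradiction. Thus the theorem reduces to: \emph{every $(k+1)$-critical graph $G$ with $\la(G)=k\ge 4$ lies in $\cH_k$.}

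\textbf{The structural core.} I would prove this last statement by induction on $|V(G)|$. If $G=K_{k+1}$ we are done. Otherwise $\de(G)\ge k$ but $G\ne K_{k+1}$, so Brooks' theorem gives $\De(G)\ge k+1$; fix $v$ with $d_G(v)\ge k+1>\la(G)$. The plan is to write $G=G_1\bigtriangleup G_2$ where $G_1,G_2$ are $(k+1)$-critical, have $\la\le k$, and have fewer vertices than $G$; the induction hypothesis then gives $G_1,G_2\in\cH_k$ and hence $G\in\cH_k$. Concretely I would look for a separation of the shape left behind by a Haj\'os join: a non-neighbour edge $w_1w_2$ of $v$ and $2$-cuts $\{v,w_1\}$, $\{v,w_2\}$ splitting off sides $A\ni w_1$ and $B\ni w_2$ with $w_1$ (resp.\ $w_2$) having a single neighbour across the cut. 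Given such a separation, set $G_1:=G[\{v\}\cup A]+vw_1$ and $G_2:=G[\{v\}\cup B]+vw_2$; then $G=G_1\bigtriangleup G_2$, and $\la(G_i)\le\la(G)=k$ because any use of the inserted edge $vw_i$ can be rerouted through the deleted side of $G$, which supplies exactly one extra edge-disjoint path since $w_1w_2$ is a single edge. By \eqref{Equ:lambda} it then remains to check that each $G_i$ is $(k+1)$-critical: non-$k$-colourability of at least one side is immediate (a $k$-colouring of both sides could be glued, after permuting colours on one side to separate $w_1$ and $w_2$, which is possible as $k\ge 4$), and one arranges, by taking the separation minimal in a suitable sense, that criticality and $(k+1)$-chromaticity pass to both sides.

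\textbf{Where the work lies.} The reductions and the Haj\'os-join bookkeeping above are routine; the genuine obstacle is the structural core, and within it two points. First, one must show that a $(k+1)$-critical graph with $\la(G)=k$ and $G\ne K_{k+1}$ does admit a separation of Haj\'os type --- equivalently, that it cannot be too highly connected. This is where $\la(G)=k$ (i.e.\ $G$ is $k$-edge-connected but not $(k+1)$-edge-connected, being critical) has to be played against the existence of a vertex of degree $\ge k+1$, and I expect a Kempe-chain/recolouring analysis in the spirit of the proof of Brooks' theorem to be needed; the rigidity that every $k$-colouring of $G-vu$ forces $v$ and $u$ to receive the same colour (because $v$ has more than $k$ neighbours) is the kind of fact I would try to exploit. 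Second, once a separation is found one must choose it carefully enough that \emph{both} pieces come out $(k+1)$-critical, not merely $(k+1)$-chromatic, since the closure of $\cH_k$ under Haj\'os joins applies only to members of $\cH_k$. Propagating criticality to both sides simultaneously is, I expect, the most delicate part, and the source of the long case analysis in the actual proof.
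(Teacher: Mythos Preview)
Your reductions --- the inequality, the ``if'' direction, and the argument that a $(k+1)$-critical subgraph of a block with $\la=k$ must equal the whole block --- are correct and essentially match the paper. The structural core, however, has a genuine gap, and the plan you sketch is not the one that works.

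The issue is this: you fix a high-degree vertex $v$ and hope to locate a Haj\'os-type separation around it. But nothing you have written rules out the possibility that $G$ is $3$-connected, in which case there is \emph{no} $2$-cut at all, let alone one of the specific shape you describe. Kempe-chain recolouring arguments in the spirit of Brooks' proof do not, by themselves, force a $2$-cut to exist; what they give you is colouring information, not vertex-connectivity. So your induction stalls on the $3$-connected case, and this is exactly the case where the real content lies.

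The paper proceeds by a clean dichotomy. If $G\in\cC_k$ has a separating vertex set of size two, then a short edge-connectivity argument (using Theorem~\ref{Th:2Conn} and the fact that $\la(G)\le k$) forces a separating set consisting of one vertex and one edge, and Theorem~\ref{Th:2Sep=Hajos} then yields $G=G_1\bigtriangleup G_2$ with $G_1,G_2\in\cC_k$; this is Claim~\ref{Cl:A5}, and it handles criticality of both pieces automatically via Theorem~\ref{Th:Hajos}, so the ``delicate part'' you anticipated does not arise. The hard case is when $G$ is $3$-connected (Claim~\ref{Cl:A4}). Here the paper uses Gallai's Theorem~\ref{Th:Gallai} to produce two \emph{distinct} high vertices $u,v$; since $\la_G(u,v)=k$, there is a separating edge set $F$ of size $k$ between them, and Toft's Theorem~\ref{Th:Toft} then gives very precise structure on the edge cut $(X,Y,F)$. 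One builds a strictly smaller $3$-connected graph $G'\in\cC_k$ (by completing $Y_F$ to a clique) and derives a contradiction to minimality. The key tool you are missing is Theorem~\ref{Th:Toft} on $k$-edge-cuts in $(k+1)$-critical graphs; it, not Kempe chains, is what converts the hypothesis $\la(G)=k$ into usable structure in the $3$-connected case.
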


For the proof of this result, let $G$ be a non-empty graph with $\la(G)=k$. By (\ref{Equ:lambda}), we obtain $\cn(G)\leq k+1$. By an observation of Haj\'os \cite{Hajos61} it follows that every graph in $\cH_k$ has chromatic number $k+1$. Hence if some block of $G$ belongs to $\cH_k$, then (\ref{Equ:Block}) implies that $\cn(G)=k+1$. So it only remains to show that if $\cn(G)=k+1$, then some block of $G$ belongs to $\cH_k$. For proving this, we shall use the critical graph method, see \cite{StiebitzT2015}.

A graph $G$ is {\em critical} if every proper subgraph $H$ of $G$ satisfies $\cn(H)<\cn(G)$. We shall use the following two properties of critical graphs.
As an immediate consequence of (\ref{Equ:Block}) we obtain that if $G$ is a critical graph, then $G=\ems$ or $G$ contains no separating vertex, implying that $G$ is its only block. Furthermore, every graph contains a critical subgraph with the same chromatic number.

Let $G$ be a non-empty graph with $\la(G)=k$ and $\cn(G)=k+1$. Then $G$ contains a critical subgraph $H$ with chromatic number $k+1$, and we obtain that $\la(H)\leq \la(G)=k$. So the proof of Theorem~\ref{Th:local} is complete if we can show that $H$ is a block of $G$ which belongs to $\cH_k$. For an integer $k\geq 0$, let $\cC_k$ denote the class of graphs $H$ such that $H$ is a critical graph with chromatic number $k+1$ and with $\la(H)\leq k$. We shall prove that the two classes $\cC_k$ and $\cH_k$ are the same.

\section{Connectivity of critical graphs}

In this section we shall review known results about the structure of critical graphs. First we need some notation. Let $G$ be an arbitrary graph. For an integer $k\geq 0$, let $\CO_k(G)$ denote the set of all colorings of $G$ with color set $\{1,2, \ldots, k\}$. Then a function $f:V(G) \to \{1,2, \ldots, k\}$ belongs to $\CO_k(G)$ if and only if $f^{-1}(c)$ is an independent vertex set of $G$ (possibly empty) for every color $c\in \{1,2, \ldots, k\}$. A set $S\subseteq V(G) \cup E(G)$ is called a {\em separating set} of $G$ if $G-S$ has more components than $G$. A vertex $v$ of $G$ is called a {\em separating vertex} of $G$ if $\{v\}$ is a separating  set of $G$. An edge $e$ of $G$ is called a {\em bridge} of $G$ if $\{e\}$ is a separating set of $G$. For a vertex set $X\subseteq V(G)$, let $\bG(X)$ denote the set of all edges of $G$ having exactly one end in $X$. Clearly, if $G$ is connected and $\ems\not= X \varsubsetneq V(G)$, then $F=\bG(X)$ is a separating set of edges of $G$. The converse is not true. However if $F$ is a minimal separating edge set of a connected graph $G$, then $F=\bG(X)$ for some vertex set $X$. As a consequence of Menger's theorem about edge connectivity, we obtain that if $v$ and $w$ are two distinct vertices of $G$, then
$$\la_G(v,w)=\min\set{|\bG(X)|}{X\subseteq V(G), v\in X, w\not\in X}.$$

Color critical graphs were first introduced and investigated by Dirac in the 1950s. He established the basic properties of critical graphs in a series of papers \cite{Dirac52},  \cite{Dirac53} and \cite{Dirac57}. Some of these basic properties are listed in the next theorem.

\begin{theorem} [Dirac 1952]
Let $G$ be a critical graph with chromatic number $k+1$ for an integer $k\geq 0$. Then the following statements hold:
\begin{itemize}

\item[{\rm (a)}] $\de(G)\leq k$
\item[{\rm (b)}] If $k=0,1$, then $G$ is a complete graph of order $k+1$; and if $k=2$, then $G$ is an odd cycle.
\item[{\rm (c)}] No separating vertex set of $G$ is a clique of $G$. As a consequence, $G$ is connected and has no separating vertex, i.e., $G$ is a block.
\item[{\rm (d)}] If $v$ and $w$ are two distinct vertices of $G$, then $\la_G(v,w)\geq k$. As a consequence $G$ is $k$-edge-connected.
\end{itemize}
\label{Th:Dirac}
\end{theorem}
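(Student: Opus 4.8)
The plan is to establish the four assertions in the stated order; (a), (b) and (c) are short, and (d) carries the real content. \emph{For (a)} I would show that every vertex of $G$ has degree at least $k$: if some $v$ had $d_G(v)\le k-1$, then $G-v$ is a proper subgraph of the critical graph $G$, so $\cn(G-v)\le k$; fixing a $k$-coloring of $G-v$ and giving $v$ one of the (at least one) colors missing on its $\le k-1$ neighbours yields a $k$-coloring of $G$, a contradiction. \emph{For (b)}: if $k=0$ then $\cn(G)=1$, so $G$ is edgeless, and since an edgeless graph on $\ge2$ vertices is not critical, $G=K_1$. If $k=1$ then $\cn(G)=2$, so $G$ contains an edge, hence a copy of $K_2$ with $\cn(K_2)=2=\cn(G)$; criticality forces $G=K_2$. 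If $k=2$ then $\cn(G)=3$, so $G$ is not bipartite and therefore contains an odd cycle $C$ with $\cn(C)=3=\cn(G)$; criticality forces $G=C$.

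\emph{For (c)}, suppose $S$ is a separating set of $G$ that induces a clique, and let $C_1,\dots,C_t$ ($t\ge2$) be the components of $G-S$; set $G_i=G[V(C_i)\cup S]$. Each $G_i$ is a proper subgraph of $G$, so $\cn(G_i)\le k$, and since $S$ is a clique of $G_i$ its $|S|\le k$ vertices get pairwise distinct colors in every $k$-coloring of $G_i$. Hence, after permuting colors within the individual colorings, we may assume they all agree on $S$; their union is then a $k$-coloring of $G$, since every edge of $G$ lies inside some $G_i$ --- contradicting $\cn(G)=k+1$. Thus no separating set is a clique; in particular no single vertex is separating, and since a disconnected graph is never critical (one component would realize $\cn(G)$ and be a proper subgraph, or no component would), $G$ is connected and hence a single block.

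\emph{For (d)}, I would first prove that $\la_G(v,w)\ge k$ whenever $e=vw\in E(G)$. Since $G$ is critical, $\cn(G-e)=k$; fix a $k$-coloring $c$ of $G-e$, and observe that $c(v)=c(w)$, for otherwise $c$ properly colors $G$; call this common color $\al$. For each color $\beta\ne\al$, let $H_\beta$ be the subgraph of $G-e$ induced by the vertices colored $\al$ or $\beta$. If $v$ and $w$ lay in different components of $H_\beta$, then interchanging $\al$ and $\beta$ on the component of $v$ would produce a proper $k$-coloring of $G$; hence $H_\beta$ contains a $v$-$w$ path $P_\beta$. Every edge of $P_\beta$ joins a vertex colored $\al$ to one colored $\beta$, so for $\beta\ne\beta'$ the paths $P_\beta$ and $P_{\beta'}$ are edge-disjoint, and none of them uses $e$; together with $e$ this gives $k$ pairwise edge-disjoint $v$-$w$ paths, so $\la_G(v,w)\ge k$. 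Finally, for arbitrary distinct $v,w$ and any $X$ with $v\in X$ and $w\notin X$, the cut $\bG(X)$ is nonempty ($G$ is connected) and contains an edge $xy$ with $x\in X$ and $y\notin X$; the Menger formula $\la_G(x,y)=\min\set{|\bG(Y)|}{x\in Y, y\notin Y}$ stated above gives $|\bG(X)|\ge\la_G(x,y)\ge k$. Taking the minimum over all such $X$ yields $\la_G(v,w)\ge k$, so $G$ is $k$-edge-connected.

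The main obstacle is part (d): the Kempe-chain construction of $\cn(G)-1$ pairwise edge-disjoint paths between the ends of an edge (and verifying their edge-disjointness via the two-color structure of each $H_\beta$), together with the step passing from adjacent pairs to arbitrary pairs via the Menger cut formula. Parts (a)--(c) are routine greedy-extension and color-pasting arguments.
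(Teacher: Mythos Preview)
The paper does not actually prove this theorem; it is stated as a classical result of Dirac with references \cite{Dirac52,Dirac53,Dirac57} and used as a black box. So there is no in-paper argument to compare against.

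Your proof is correct and is essentially the standard one. A few remarks. In (a) the paper's inequality $\de(G)\le k$ is evidently a typo for $\de(G)\ge k$ (the subsequent discussion of low and high vertices makes this clear), and you have proved the intended statement. In (c) your parenthetical about disconnected graphs is slightly garbled: since $\cn(G)$ equals the maximum of $\cn$ over the components, some component always realizes $\cn(G)$, so the ``or no component would'' alternative never occurs; but the conclusion stands. In (d) your Kempe-chain construction of $k-1$ edge-disjoint $v$--$w$ paths in $G-e$ for an edge $e=vw$, together with $e$ itself, is exactly Dirac's original argument, and your verification of edge-disjointness via the bipartite structure of each $H_\beta$ is clean. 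The passage from adjacent pairs to arbitrary pairs via the Menger cut formula---picking an edge $xy$ across an arbitrary cut $\bG(X)$ separating $v$ from $w$ and invoking $|\bG(X)|\ge\la_G(x,y)\ge k$---is also correct and is the natural way to finish.
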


Theorem~\ref{Th:Dirac}(a) leads to a very natural way of classifying the vertices of a critical graph into two classes. Let $G$ be a critical graph with chromatic number $k+1$. The vertices of $G$ having degree $k$ in $G$ are called {\em low vertices} of $G$, and the remaining vertices are called {\em high vertices} of $G$. So any high vertex of $G$ has degree at least $k+1$ in $G$. Furthermore, let $G_L$ be the subgraph of $G$ induced by the low vertices of $G$, and let $G_H$ be the subgraph of $G$ induced by the high vertices of $G$. We call $G_L$ the {\em low vertex subgraph} of $G$ and $G_H$ the {\em high vertex subgraph} of $G$. This classification is due to Gallai \cite{Gallai63a} who proved the following theorem. Note that statements (b) and (c) of Gallai's theorem are simple consequences of statement (a), which is an extension of Brooks' theorem.

\begin{theorem} [Gallai 1963]
Let $G$ be a critical graph with chromatic number $k+1$ for an integer $k\geq 1$. Then the following statements hold:
\begin{itemize}
\item[{\rm (a)}] Every block of $G_L$ is a complete graph or an odd cycle
\item[{\rm (b)}] If $G_H=\ems$, then $G$ is a complete graph of order $k+1$ if $k\not=2$, and $G$ is an odd cycle if $k=2$.
\item[{\rm (c)}] If $|V(G_H)|=1$, then either $G$ has a separating vertex set of two vertices or $k=3$ and $G$ is an odd wheel.
\end{itemize}
\label{Th:Gallai}
\end{theorem}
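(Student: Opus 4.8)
The plan is to prove the contrapositive of statement (a): a $(k+1)$-critical graph $G$ has no block $B$ of its low vertex subgraph $G_L$ with $B$ neither a complete graph nor an odd cycle. A hypothetical bad block $B$, being a block, must be $2$-connected with at least three vertices (the alternatives $K_1,K_2$ are complete), so $B$ has an edge and $\Delta(B)\ge 2$. I would first record that $B$ is an induced subgraph of $G$: an edge of $G$ joining two vertices of $V(B)$ joins two low vertices, hence lies in $G_L$, and since those two vertices share the block $B$ that edge lies in $E(B)$. Consequently, for $v\in V(B)$ the neighbours of $v$ outside $V(B)$ number $d_G(v)-d_B(v)=k-d_B(v)$, using that $v$ is low.

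The key step is a reduction to a list-colouring statement about $B$ alone. Since $B$ has an edge, $G'=G-E(B)$ is a proper subgraph of $G$, so by criticality it has a proper colouring $f:V(G)\to\{1,\dots,k\}$. For $v\in V(B)$ let $L(v)$ be the set of colours not used by $f$ on $N_G(v)\setminus V(B)$; deleting the edges of $B$ does not disturb these neighbours, so $|L(v)|\ge k-(k-d_B(v))=d_B(v)$. If each $v\in V(B)$ can be assigned a colour $g(v)\in L(v)$ so that $g$ is proper on $B$, then replacing $f$ by $g$ on $V(B)$ (keeping $f$ elsewhere) yields a proper $k$-colouring of $G$: edges inside $B$ are handled by $g$, edges avoiding $V(B)$ by $f$, and edges leaving $V(B)$ by the constraint $g(v)\in L(v)$. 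This contradicts $\cn(G)=k+1$, so everything reduces to proving: \emph{if $H$ is a $2$-connected graph that is neither complete nor an odd cycle and $|L(v)|\ge d_H(v)$ for all $v$, then $H$ has a proper colouring $g$ with $g(v)\in L(v)$ everywhere.}

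This last statement is where Brooks' theorem (Theorem~\ref{Th:Brooks}) enters, and it is the step I expect to be the main obstacle, since ordinary colouring no longer suffices once the lists vary. I would split into two cases. If all $L(v)$ coincide with one set $L$, then $|L|\ge\Delta(H)$, and $H$ has a proper colouring using colours of $L$ greedily if $|L|>\Delta(H)$ and by Theorem~\ref{Th:Brooks} if $|L|=\Delta(H)$, as $H$ is connected, incomplete and not an odd cycle. Otherwise, by connectedness there are adjacent vertices $u,v$ and a colour $c\in L(u)\setminus L(v)$; colour $u$ with $c$, delete $c$ from every neighbour's list, and pass to $H-u$, which remains connected because $H$ is $2$-connected. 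One checks that the reduced lists $L'$ still satisfy $|L'(w)|\ge d_{H-u}(w)$ for all $w$, with $|L'(v)|\ge d_{H-u}(v)+1$ since $v$'s list was untouched. Ordering $V(H-u)$ by non-increasing distance from $v$ in a spanning tree rooted at $v$, so that $v$ is last and every other vertex precedes its parent, a greedy colouring then succeeds: each non-root vertex has an uncoloured neighbour (its parent) when processed, so fewer than $d_{H-u}(w)\le |L'(w)|$ colours are forbidden, and the root $v$ carries one spare colour. Together with $g(u)=c$ this list-colours $H$, finishing statement (a).

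Finally I would read off (b) and (c) from (a). If $G_H=\ems$, then $G=G_L$ is $k$-regular and, by Theorem~\ref{Th:Dirac}(c), a single block, hence complete or an odd cycle by (a); $k$-regularity forces $K_{k+1}$ unless $k=2$ (the cases $k\le 1$ being immediate from Theorem~\ref{Th:Dirac}(b)). If $G_H=\{u\}$, then necessarily $k\ge 3$, and if moreover $G$ has no separating set of two vertices then $G$ is $3$-connected (a $2$-connected graph on at least four vertices with no separating set of two vertices is $3$-connected), so $G-u=G_L$ is $2$-connected, again a single block, complete or an odd cycle. Since all low vertices have degree $k$, the complete case would force $G=K_{k+1}$ with $u$ of degree $k$ --- contradicting that $u$ is high --- so $G_L$ is an odd cycle with every vertex adjacent to $u$, whence $k=3$ and $G$ is an odd wheel.
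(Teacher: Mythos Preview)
The paper does not actually prove Theorem~\ref{Th:Gallai}; it is quoted as a background result of Gallai, with only the remark that ``statements (b) and (c) of Gallai's theorem are simple consequences of statement (a), which is an extension of Brooks' theorem.'' So there is no detailed argument in the paper to compare against.

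Your proof is correct and follows the standard route. The reduction you carry out for (a)---deleting $E(B)$, colouring the rest by criticality, and reading off lists $L(v)$ with $|L(v)|\ge d_B(v)$---is exactly how one reduces Gallai's structure theorem to the degree version of Brooks' theorem (the Erd\H{o}s--Rubin--Taylor characterization of degree-choosable graphs). Your self-contained proof of that list-colouring lemma, splitting into the uniform-list case (ordinary Brooks) and the non-uniform case (colour a vertex $u$ with a colour missing from an adjacent list, then greedily sweep $H-u$ toward $v$), is clean and complete; note that this is precisely the ``degree version of Brooks' theorem'' the paper invokes later, in the sketch of Theorem~\ref{Th:Algorithm}, citing \cite[Theorem~2.1]{StiebitzT2015}.

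Your derivations of (b) and (c) from (a) match what the paper asserts without detail. One small point worth making explicit in (c): when $G_L$ is complete, say $G_L=K_n$, every low vertex has $G_L$-degree $n-1$ and $G$-degree $k$, forcing $n\in\{k,k+1\}$; the case $n=k+1$ leaves $u$ isolated and $n=k$ gives $d_G(u)=k$, so both contradict $u$ being high. Your one-line summary of this is a bit compressed but the conclusion is right.
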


As observed by Dirac, a critical graph is connected and contains no separating vertex. Dirac \cite{Dirac52} and Gallai \cite{Gallai63a} characterized critical graphs
having a separating vertex set of size two. In particular, they proved the following theorem, which shows how to decompose a critical graph having a separating vertex set of size two into smaller critical graphs.

\begin{theorem} [Dirac 1952 and Gallai 1963]
Let $G$ be a critical graph with chromatic number $k+1$ for an integer $k\geq 3$, and let $S\subseteq V(G)$ be a separating vertex set of $G$ with $|S|\leq 2$. Then $S$ is an independent vertex set of $G$ consisting of two vertices, say $v$ and $w$, and $G-S$ has exactly two components $H_1$ and $H_2$. Moreover, if $G_i=G[V(H_i) \cup S]$ for $i=1,2$, we can adjust the notation so that for some coloring $f_1\in \CO_k(G_1)$ we have $f_1(v)=f_1(w)$. Then the following statements hold:
\begin{itemize}
\item[{\rm (a)}] Every coloring $f\in \CO_k(G_1)$ satisfies $f(v)=f(w)$ and every coloring $f\in \CO_k(G_2)$ satisfies $f(v)\not= f(w)$.
\item[{\rm (b)}] The subgraph $G_1'=G_1+vw$ obtained from $G_1$ by adding the edge $vw$ is critical and has chromatic number $k+1$.
\item[{\rm (c)}] The vertices $v$ and $w$ have no common neighbor in $G_2$ and the subgraph $G_2'=G_2/S$ obtained from $G_2$ by identifying $v$ and $w$ is critical and has chromatic number $k+1$.
\end{itemize}
\label{Th:2Conn}
\end{theorem}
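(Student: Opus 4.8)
The plan is to combine the block structure of critical graphs with the single fact that, because $G$ is critical, $G-e$ is $k$-colorable for every edge $e$. Since $G$ has no separating vertex (Theorem~\ref{Th:Dirac}(c)) and $|S|\le 2$, we have $|S|=2$; write $S=\{v,w\}$, and since $S$, being a separating set, is not a clique (same theorem), $vw\notin E(G)$, so $S$ is independent. Let $C_1,\dots,C_m$ ($m\ge 2$) be the components of $G-S$ and put $G_i=G[V(C_i)\cup S]$. Each $G_i$ is a proper subgraph of $G$, so $\cn(G_i)\le k$, and since $vw\notin E(G)$ every $f\in\CO_k(G_i)$ assigns $v,w$ either equal or distinct colors. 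Call $C_i$ of \emph{type~I} if every $f\in\CO_k(G_i)$ has $f(v)=f(w)$, and of \emph{type~II} if every such $f$ has $f(v)\ne f(w)$. Note also $\de(G)\ge k\ge 3$ (a vertex of smaller degree could be colored last), so no $C_i$ is a single vertex and hence $E(C_i)\ne\ems$.

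The first real step is to show each $C_i$ has a type and that $m=2$. If some $G_i$ had colorings of both kinds, then a $k$-coloring of the proper subgraph $G-V(C_i)$ could be combined, after permuting the colors on $G_i$ and choosing the matching kind, with a coloring of $G_i$ to $k$-color $G$, which is impossible; so every $C_i$ has a type. If all $C_i$ were type~I we could take $k$-colorings with $f(v)=f(w)$, permute so that they all agree on $S$, and combine them into a $k$-coloring of $G$; hence some $C_i$ is type~II, and symmetrically some is type~I. Finally, if two components $C_i,C_j$ were type~I, choose an edge $e$ with an endpoint in $V(C_i)$ (possible since $E(C_i)\ne\ems$); then $G-e$ is $k$-colorable, yet $G_j$ and the type-II component are subgraphs of $G-e$ untouched by the deletion, simultaneously forcing $f(v)=f(w)$ and $f(v)\ne f(w)$, a contradiction. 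So there is exactly one type-I and, symmetrically, exactly one type-II component; thus $m=2$, and relabeling so that $G_1=G[V(H_1)\cup S]$ is type~I and $G_2=G[V(H_2)\cup S]$ is type~II establishes the opening assertion, statement~(a), and the normalization.

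For~(b): $\cn(G_1')\le\cn(G_1)+1\le k+1$, while $\cn(G_1')\ge k+1$ because a $k$-coloring of $G_1'=G_1+vw$ would be a $k$-coloring of $G_1$ with $v,w$ distinct, contradicting type~I; so $\cn(G_1')=k+1$, and since $G_1'$ has no isolated vertex, criticality follows once $\cn(G_1'-e)\le k$ is shown for every edge $e$. For $e=vw$ this is $\cn(G_1)\le k$; for $e\in E(G_1)$, $e$ has an endpoint in $V(H_1)$, so $e\notin E(G_2)$, and any $k$-coloring $f$ of $G-e$ satisfies $f(v)\ne f(w)$ because the type-II graph $G_2$ is untouched, whence $f$ colors $(G_1-e)+vw=G_1'-e$. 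For~(c), first observe that $v$ and $w$ have no common neighbor in $G_2$: if $x\in V(H_2)$ were adjacent to both, a $k$-coloring $f$ of $G-vx$ would satisfy $f(v)=f(w)$ (the type-I graph $G_1$ is untouched) and $f(x)\ne f(w)$ (the edge $wx$ survives), hence $f(v)\ne f(x)$ and $f$ would $k$-color $G$, absurd. Consequently, in $G_2'=G_2/S$ the identified vertex $z$ is joined to each vertex by at most one edge, and every edge of $G_2'$ is either an edge of $H_2$ or corresponds to a unique edge $vx$ or $wx$ of $G$. Then $\cn(G_2')\le\cn(G_2)+1\le k+1$ (color $G_2$ with $k$ colors; if $v,w$ differ give $z$ a new color) and $\cn(G_2')\ge k+1$ (a $k$-coloring of $G_2'$ unfolds to a $k$-coloring of $G_2$ with $v,w$ equal, contradicting type~II), so $\cn(G_2')=k+1$; criticality then follows as in~(b), since for any edge $e'$ of $G_2'$ the corresponding edge $e$ of $G$ has an endpoint in $V(H_2)$, so $G_1$ is untouched in $G-e$, a $k$-coloring $f$ of $G-e$ has $f(v)=f(w)$, and $f$ descends to a $k$-coloring of $(G_2-e)/S=G_2'-e'$, where the no-common-neighbor property is exactly what guarantees $(G_2-e)/S=G_2'-e'$.

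The two delicate points are excluding $m\ge 3$, where the device of deleting one edge inside a type-I component while two color-incompatible components remain intact is the crux, and, in~(c), the passage to the simple graph $G_2/S$: the no-common-neighbor claim is needed precisely so that deleting one edge of $G$ deletes exactly one edge of $G_2'$, which keeps the criticality bookkeeping honest. Everything else — the block structure from Theorem~\ref{Th:Dirac}, the elementary bounds that $\cn$ grows by at most one when an edge is added or two non-adjacent vertices are identified, and the routine merging of colorings along $S$ by permuting colors — is standard.
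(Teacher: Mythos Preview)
The paper does not supply its own proof of Theorem~\ref{Th:2Conn}; it is quoted as a classical result of Dirac and Gallai and used as a black box. So there is nothing in the paper to compare your argument against beyond the statement itself.

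Your proof is correct and is essentially the classical argument. The type-I/type-II dichotomy, the exclusion of mixed components by combining colorings across $S$, and the reduction to $m=2$ via deleting an edge inside one component while two incompatible components remain intact are exactly the standard moves. The derivations of (b) and (c) from (a) by pulling back $k$-colorings of $G-e$ are likewise the textbook route, and your care with the no-common-neighbor claim in (c) --- both to keep $G_2/S$ simple and to make the edge correspondence $e\leftrightarrow e'$ bijective --- is precisely the point that needs attention. Two small remarks: in (b) and (c) you appeal to ``no isolated vertex'' to pass from edge-criticality to criticality; this is fine, but for $G_2'$ it silently uses that both $v$ and $w$ have neighbors in $V(H_2)$, which follows since otherwise the other vertex of $S$ would be a separating vertex of $G$. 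Also, your claim $E(C_i)\ne\ems$ is stronger than needed --- an edge with an endpoint in $V(C_i)$ suffices, and such an edge always exists since $\de(G)\ge k\ge 3$ --- but your version is true too, since each $C_i$ is connected on at least two vertices.
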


Dirac \cite{Dirac64} and Gallai \cite{Gallai63a} also proved the converse theorem, that $G$ is critical and has chromatic number $k+1$ provided that $G_1'$ is critical and has chromatic number $k+1$ and $G_2$ obtained from the critical graph $G_2'$ with chromatic number $k+1$ by splitting a vertex  into $v$ and $w$ has chromatic number $k$.

Haj\'os \cite{Hajos61} invented his construction to characterize the class of graphs with chromatic number at least $k+1$. Another advantage of the Haj\'os join is the well known fact that it not only preserve the chromatic number, but also criticality. It may be viewed as a special case of the Dirac--Gallai construction, described above.

\begin{theorem} [Haj\'os 1961]
Let $G=G_1 \bigtriangleup G_2$ be the Haj\'os join of two graphs $G_1$ and $G_2$, and let $k\geq 3$ be an integer. Then $G$ is critical and has chromatic number $k+1$ if and only if both $G_1$ and $G_2$ are critical and have chromatic number $k+1$.
\label{Th:Hajos}
\end{theorem}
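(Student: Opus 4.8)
\noindent\emph{Proof proposal.}
The Haj\'os join is symmetric in $G_1$ and $G_2$, so in each direction it suffices to argue about $G_1$, the statement for $G_2$ following by interchanging the roles of the two graphs. Write $v=v_1=v_2$ for the identified vertex, and recall that the edges of $G=G_1\bigtriangleup G_2$ are exactly those of $G_1-e_1$, those of $G_2-e_2$, and the single new edge $w_1w_2$. I will use three elementary facts. First, Haj\'os' observation: if $\cn(G_1)\geq k+1$ and $\cn(G_2)\geq k+1$, then $\cn(G)\geq k+1$, since a proper $k$-coloring $c$ of $G$ restricts on $V(G_i)$ to a proper $k$-coloring of $G_i-e_i$, forcing $c(v_i)=c(w_i)$ (otherwise $c$ would $k$-color $G_i$), and as $v_1=v_2$ this gives $c(w_1)=c(w_2)$, contradicting $w_1w_2\in E(G)$. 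Second, since $G-w_1w_2$ is obtained from $G_1-e_1$ and $G_2-e_2$ by gluing along the single vertex $v$, we have $\cn(G-w_1w_2)=\max\{\cn(G_1-e_1),\cn(G_2-e_2)\}$; more generally, two proper colorings of $G_1-e_1$ and of $G_2-e_2$ that use a common palette and agree at $v$ paste into a proper coloring of $G-w_1w_2$, and after independently permuting colors on the two sides such a pasting also respects $w_1w_2$ as soon as the colors of $w_1$ and $w_2$ can be kept distinct. Third, if $G_i$ is not $k$-colorable, then every proper $k$-coloring of $G_i-e_i$ gives $v_i$ and $w_i$ the same color.

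For the ``if'' direction, assume $G_1$ and $G_2$ are critical with chromatic number $k+1$. Then $G_i-e_i$ is $k$-chromatic, so the gluing identity gives $\cn(G-w_1w_2)=k$, hence $\cn(G)\leq k+1$; together with Haj\'os' observation this yields $\cn(G)=k+1$. To prove $G$ critical I would delete an arbitrary edge $e$ and build a proper $k$-coloring of $G-e$. If $e=w_1w_2$, take $k$-colorings of $G_1-e_1$ and of $G_2-e_2$ (which exist by criticality); by the third fact each identifies $v_i$ with $w_i$, so after normalization $v$, $w_1$, $w_2$ all receive color $1$ and the two colorings paste. If $e$ is an edge of $G_1-e_1$, take a $k$-coloring of $G_1-e$ (which still contains $e_1$, hence separates the colors of $v_1$ and $w_1$) and a $k$-coloring of $G_2-e_2$, normalize so that the first gives $v\mapsto 1$, $w_1\mapsto 2$ and the second gives $v\mapsto 1$, $w_2\mapsto 1$, and paste; the edge $w_1w_2$ is respected since $2\neq 1$. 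The case $e\in E(G_2-e_2)$ is symmetric. Finally, every vertex of $G$ other than $v$ has degree equal to its degree in $G_1$ or in $G_2$, which is at least $k$ because a critical graph of chromatic number $k+1$ has minimum degree at least $k$, while $v$ has degree at least $2k-2$; thus $G$ has no isolated vertex, every vertex deletion is contained in an edge deletion, and $G$ is critical.

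For the ``only if'' direction, assume $G$ is critical with $\cn(G)=k+1$. Since $G_i-e_i$ is a proper subgraph of $G$ we get $\cn(G_i-e_i)\leq k$, hence $\cn(G_i)\leq k+1$. Next I would exclude $\cn(G_1)\leq k$. If $\cn(G_2-e_2)=k+1$, the gluing identity gives $\cn(G-w_1w_2)=k+1=\cn(G)$, and $G-w_1w_2$ is a proper subgraph of $G$, contradicting criticality. If $\cn(G_2-e_2)\leq k$, take a proper $k$-coloring of $G_1$ (which separates $v_1$ and $w_1$) and one of $G_2-e_2$; normalizing the latter so that $v\mapsto 1$, its color $b$ on $w_2$ lies in $\{1,\ldots,k\}$, and using $k\geq 3$ I can normalize the former so that $v\mapsto 1$ while $w_1$ gets a color distinct from both $1$ and $b$; pasting gives a proper $k$-coloring of $G$, a contradiction. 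Hence $\cn(G_1)=k+1$, and symmetrically $\cn(G_2)=k+1$. It remains to prove that $G_1$ is critical, i.e.\ that $\cn(G_1-e)\leq k$ for every edge $e$ of $G_1$; a vertex deletion is then handled as above, using that $G_1$ has no isolated vertex because $G$ has none. For $e=e_1$ this holds since $G_1-e_1\subsetneq G$. For $e\neq e_1$, take a proper $k$-coloring $c$ of $G-e$, which exists because $G$ is critical; its restriction to $V(G_1)$ properly colors $G_1-e-e_1$, and I claim $c(v_1)\neq c(w_1)$, so that this restriction properly colors $G_1-e$. Indeed, if $c(v_1)=c(w_1)$, then $c(v_2)=c(v_1)=c(w_1)\neq c(w_2)$ because $w_1w_2\in E(G-e)$, so the restriction of $c$ to $V(G_2)$ is a proper $k$-coloring of $G_2-e_2$ that respects $e_2$, i.e.\ a proper $k$-coloring of $G_2$, contradicting $\cn(G_2)=k+1$. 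Hence $\cn(G_1-e)\leq k$, and $G_1$ is critical; $G_2$ is symmetric.

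The pasting constructions and the degree bookkeeping are routine; the real content, and the step I expect to be the main obstacle, is the ``only if'' direction. Two points there need care. Excluding $\cn(G_1)\leq k$ genuinely uses that $G$ is critical, not merely that $\cn(G)=k+1$, in order to kill the subcase $\cn(G_2-e_2)=k+1$, and it uses $k\geq 3$ to prevent a color clash on $\{w_1,w_2\}$ (indeed the equivalence already fails for $k=2$). And the criticality argument hinges on the observation that a $k$-coloring of $G-e$ which fails to separate $v_1$ from $w_1$ becomes, through the identified vertex together with the new edge $w_1w_2$, a proper $k$-coloring of the other side, which is impossible once $\cn(G_2)=k+1$ is known; because of this dependence, the step $\cn(G_i)=k+1$ has to be carried out before the criticality step.
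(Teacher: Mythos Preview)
Your argument is correct. Note, however, that the paper does not actually prove Theorem~\ref{Th:Hajos}: it is quoted as a classical result of Haj\'os and used as a black box, with only the remark that it may be viewed as a special case of the Dirac--Gallai construction (Theorem~\ref{Th:2Conn}). So there is nothing in the paper to compare your proof against beyond that one sentence; what you have written is a complete, self-contained elementary proof of the cited fact.

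Two small comments. First, in the ``only if'' direction you open with the observation that $G_i-e_i\subsetneq G$ forces $\cn(G_i-e_i)\le k$, and then immediately case-split on whether $\cn(G_2-e_2)=k+1$; that first case is vacuous given what you just proved, so you can drop it. Second, your parenthetical that the equivalence fails for $k=2$ is not quite right as stated: the Haj\'os join of two odd cycles is again an odd cycle, and conversely any decomposition of an odd cycle as a Haj\'os join comes from two odd cycles, so the equivalence in fact persists for $k=2$. What is true, and what you actually need, is that your particular pasting argument for excluding $\cn(G_1)\le k$ uses a third color to avoid a clash at $w_1w_2$; this is a feature of the proof, not a failure of the statement. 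Neither point affects the validity of your proof for $k\ge 3$.
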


If $G$ is the Haj\'os join of two graphs that are critical and have chromatic number $k+1$, where $k\geq 3$, then $G$ is critical and has chromatic number $k+1$. Moreover, $G$ has a separating set consisting of one edge and one vertex. Theorem~\ref{Th:2Conn} implies that the converse statement also holds.

\begin{theorem}
Let $G$ be a critical graph graph with chromatic number $k+1$ for an integer $k\geq 3$. If $G$ has a separating set consisting of one edge and one vertex, then $G$ is the Haj\'os join of two graphs.
\label{Th:2Sep=Hajos}
\end{theorem}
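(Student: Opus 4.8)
The plan is to recover the two pieces of the Haj\'os join directly from the given separating set, using Theorem~\ref{Th:2Conn} to guarantee that these pieces are honest simple graphs. Write the separating set as $\{v,e\}$ with $v\in V(G)$ and $e\in E(G)$. By Theorem~\ref{Th:Dirac}(c), $G$ has no separating vertex, so $G-v$ is connected; hence $e$ cannot be incident with $v$ (otherwise $G-\{v,e\}=G-v$ would still be connected), and $e$ is a bridge of the connected graph $G-v$. Writing $e=xy$, the graph $G-\{v,e\}$ therefore has exactly two components, say $A$ with $x\in V(A)$ and $B$ with $y\in V(B)$. In particular $V(G)$ is the disjoint union of $\{v\}$, $V(A)$ and $V(B)$, and $e=xy$ is the \emph{only} edge of $G$ with one end in $V(A)$ and the other in $V(B)$.

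Next I would check that $\{v,x\}$ and $\{v,y\}$ are separating vertex sets of $G$. By Theorem~\ref{Th:Dirac}(d), $G$ is $k$-edge-connected, so $\de(G)\ge k\ge 3$. If $V(A)=\{x\}$, then every neighbour of $x$ in $G$ lies in $\{v,y\}$, giving $d_G(x)\le 2<k$, a contradiction; hence $|V(A)|\ge 2$, and symmetrically $|V(B)|\ge 2$. Then $G-\{v,x\}$ contains the non-empty sets $V(A)\setminus\{x\}$ and $V(B)$, between which there is no edge, so $\{v,x\}$ is separating, and likewise $\{v,y\}$. Since $k\ge 3$, Theorem~\ref{Th:2Conn} applies to each of these sets and tells us, among other things, that each is an independent set of $G$; that is, $vx\notin E(G)$ and $vy\notin E(G)$.

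Finally I would put $G_1=G[\{v\}\cup V(A)]+vx$ and $G_2=G[\{v\}\cup V(B)]+vy$; these are simple graphs precisely because the edges $vx$ and $vy$ are new, by the previous step. Choosing $e_1=vx$ with ends $v_1=v$ and $w_1=x$ in $G_1$, and $e_2=vy$ with ends $v_2=v$ and $w_2=y$ in $G_2$, a direct check shows that $(G_1,v_1,w_1)\bigtriangleup(G_2,v_2,w_2)$ has vertex set $\{v\}\cup V(A)\cup V(B)=V(G)$ and edge set $E(G[\{v\}\cup V(A)])\cup E(G[\{v\}\cup V(B)])\cup\{xy\}$, which by the first paragraph is exactly $E(G)$. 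Hence $G=G_1\bigtriangleup G_2$. The argument is largely bookkeeping once Theorems~\ref{Th:Dirac} and~\ref{Th:2Conn} are available; the one genuine subtlety, and the step I would be most careful about, is the use of Theorem~\ref{Th:2Conn} to exclude the edges $vx$ and $vy$, since without this the natural splitting of $G$ at $\{v,e\}$ need not be a Haj\'os join of simple graphs.
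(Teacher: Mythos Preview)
Your proof is correct and follows exactly the route the paper indicates: the paper states Theorem~\ref{Th:2Sep=Hajos} as an immediate consequence of Theorem~\ref{Th:2Conn} without further detail, and you have supplied precisely those details. Your identification of the one genuine subtlety---using Theorem~\ref{Th:2Conn} on the derived $2$-separators $\{v,x\}$ and $\{v,y\}$ to exclude the edges $vx$ and $vy$---is spot on, and the surrounding bookkeeping (that $e$ is not incident with $v$, that $|V(A)|,|V(B)|\ge 2$, and the final edge count) is clean.
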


Next we will discuss a  decomposition result for critical graphs having chromatic number $k+1$ an having an separating edge set of size $k$.
Let $G$ be an arbitrary graph.
By an {\em edge cut} of $G$ we mean a triple $(X,Y,F)$ such that $X$ is a non-empty proper subset of $V(G)$, $Y=V(G)\sm X$, and $F=\bG(X)=\bG(Y)$. If $(X,Y,F)$ is an edge cut of $G$, then we denote by $X_F$ (respectively $Y_F$) the set of vertices of $X$ (respectively, $Y$) which are incident to some edge of $F$. An edge cut $(X,Y,F)$ of $G$ is non-trivial if $|X_F|\geq 2$ and $|Y_F|\geq 2$. The following decomposition result was proved independently by T. Gallai and Toft \cite{Toft70}.

\begin{theorem} [Toft 1970]
Let $G$ be a critical graph with chromatic number $k+1$ for an integer $k\geq 3$, and let $F\subseteq E(G)$ be a separating edge set of $G$ with $|F|\leq k$. Then $|F|=k$ and there is an edge cut $(X,Y,F)$ of $G$ satisfying the following properties:
\begin{itemize}
\item[{\rm (a)}] Every coloring $f\in \CO_k(G[X])$ satisfies $|f(X_F)|=1$ and  every coloring $f\in \CO_k(G[Y])$ satisfies $|f(Y_F)|=k$.
\item[{\rm (b)}] The subgraph $G_1$ obtained from $G[X \cup Y_F]$ by adding all edges between the vertices of $Y_F$, so that $Y_F$ becomes a clique of $G_1$, is critical and has chromatic number $k+1$.
\item[{\rm (c)}] The subgraph $G_2$ obtained from $G[Y]$ by adding a new vertex $v$ and joining $v$ to all vertices of $Y_F$
    is critical and has chromatic number $k+1$.
\end{itemize}
\label{Th:Toft}
\end{theorem}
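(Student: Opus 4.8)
The plan is to combine the $k$-edge-connectivity of a critical graph with a K\"onig--Hall argument describing how the $k$-colourings of the two sides of the cut interact across $F$. First, by Theorem~\ref{Th:Dirac}(d) the graph $G$ is $k$-edge-connected, so every non-empty separating edge set has at least $k$ edges; with $|F|\le k$ this gives $|F|=k$. Take $X$ to be the vertex set of a component of $G-F$ and put $Y=V(G)\sm X$. Then $\bG(X)$ is a non-empty separating edge set of $G$ contained in $F$, hence equal to $F$, so $(X,Y,F)$ is an edge cut. Both $X$ and $Y$ are non-empty, so $G[X]$ and $G[Y]$ are proper subgraphs of the critical graph $G$; thus $\cn(G[X])\le k$ and $\cn(G[Y])\le k$, and $\CO_k(G[X])$ and $\CO_k(G[Y])$ are both non-empty.

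The crux is the following. Let $f\in\CO_k(G[X])$, $g\in\CO_k(G[Y])$, and let $\pi$ be a permutation of $\{1,\dots,k\}$; then $\pi\circ f$ on $X$ together with $g$ on $Y$ is a colouring of $G$ that is proper everywhere except possibly on $F$, and since $\cn(G)=k+1$ it is not proper, so some edge of $F$ is monochromatic under it. Attach to the pair $(f,g)$ the bipartite graph $B_{f,g}$ on two copies of $\{1,\dots,k\}$ (an $X$-side and a $Y$-side) in which $i$ is joined to $j$ precisely when $F$ contains an edge from an $f$-colour-$i$ vertex of $X_F$ to a $g$-colour-$j$ vertex of $Y_F$. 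The observation just made says exactly that the bipartite complement of $B_{f,g}$ has no perfect matching, so by the K\"onig--Hall theorem there is a set $I$ of $X$-colours such that the set $J$ of $Y$-colours joined in $B_{f,g}$ to every colour of $I$ satisfies $|I|+|J|\ge k+1$. The edges witnessing the pairs in $I\times J$ are pairwise distinct, so $k=|F|\ge|I|\,|J|$; as $I,J\ne\ems$ this forces $|I|=1$ or $|J|=1$. If $|I|=1$ then $|J|=k$, which means every edge of $F$ has its $X$-end coloured by the one colour of $I$ while the $Y$-ends of $F$ attain all $k$ colours, i.e. $|f(X_F)|=1$ and $|g(Y_F)|=k$; symmetrically $|J|=1$ gives $|g(Y_F)|=1$ and $|f(X_F)|=k$. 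These two alternatives cannot both be realised (by possibly different pairs): pairing an $f$ from the first alternative with a $g$ from the second would yield a pair satisfying neither, since $k\neq1$. Hence all pairs satisfy the same alternative; interchanging $X$ and $Y$ if necessary we may assume it is the first, and then pairing an arbitrary $f\in\CO_k(G[X])$ with a fixed $g$, and vice versa, yields~(a). Moreover $|g(Y_F)|=k$ forces $|Y_F|=k$ (it is $\ge k$, and $\le|F|=k$ because every vertex of $Y_F$ meets $F$), and each vertex of $Y_F$ lies on exactly one edge of $F$.

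With (a) and $|Y_F|=k$ available, (b) and (c) follow by routine checks. In any $k$-colouring of $G_1$ the restriction to $G[X]$ makes $X_F$ monochromatic, yet every vertex of the $k$-clique $Y_F$ is adjacent to that colour and so $Y_F$ would need $k$ more colours, a contradiction; and a $k$-colouring of $G_2$ would make $Y_F$ rainbow, leaving no colour for the new vertex $v$, which is adjacent to all of $Y_F$ --- so $\cn(G_1)=\cn(G_2)=k+1$, the bounds $\le k+1$ being clear. For criticality, delete an edge $e$ of $G_i$ and produce a $k$-colouring: if $e$ belongs to $G[X]$, to $G[Y]$, or to $F$, start from a $k$-colouring $h$ of $G-e$ (which exists as $G$ is critical), restrict it to the side not meeting $e$, apply (a) on that side, and verify that $h$ --- after recolouring $v$ if $e\in E(G[Y])$, and noting that $|Y_F|=k$ makes $h$ injective on $Y_F$ when dealing with $G_1$ --- properly $k$-colours $G_i-e$; if $e$ is one of the edges added inside $Y_F$ in $G_1$, take any $k$-colouring of $G[X]$ (so $X_F$ gets a single colour, say $1$) and colour the clique-minus-an-edge on $Y_F$ with the $k-1$ colours $\{2,\dots,k\}$. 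I expect the K\"onig--Hall step --- deriving the dichotomy $|f(X_F)|=1$ or $|g(Y_F)|=1$ from $|F|=k$ and then excluding the coexistence of the two alternatives --- to be the main obstacle; the reduction to $|F|=k$ is immediate from Theorem~\ref{Th:Dirac}, and checking (b) and (c) is a finite case analysis.
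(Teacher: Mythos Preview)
Your argument for part~(a) is correct and closely parallel to the paper's intended route. The paper does not spell out a full proof of Theorem~\ref{Th:Toft} but isolates Lemma~\ref{Le:perfect}: if two disjoint $k$-cliques are joined by at most $k$ edges and the resulting co-bipartite graph has $\chi\ge k+1$, then by perfectness there is a $K_{k+1}$, which uses $s$ vertices on one side and $k+1-s$ on the other, and $s(k+1-s)\le k$ forces $s\in\{1,k\}$. Your K\"onig--Hall step is the same argument in dual form: a Hall-deficient set $I$ in the bipartite complement of $B_{f,g}$ yields a biclique $I\times J$ in $B_{f,g}$ with $|I|+|J|\ge k+1$ and $|I|\,|J|\le|F|=k$, hence $|I|=1$ or $|J|=1$. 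Both routes rest on K\"onig's theorem; yours is a direct formulation that avoids naming perfectness, while the paper's packaging into Lemma~\ref{Le:perfect} makes the lemma reusable (it is invoked again in the proof of Theorem~\ref{Th:Algorithm}). Your consistency step, pairing an $f$ from one alternative with a $g$ from the other, is exactly what is needed to upgrade the per-pair dichotomy to the uniform statement~(a).

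Your criticality checks for (b) and (c) are essentially right, but two cases are handled imprecisely. For $G_1$ and an edge $e=y_1y_2$ with $y_1,y_2\in Y_F$ that already lies in $E(G)$, your ``$e\in E(G[Y])$'' argument via $h\in\CO_k(G-e)$ does \emph{not} give $h$ injective on $Y_F$: here $h|_Y$ is only a colouring of $G[Y]-e$, so~(a) does not apply on the $Y$-side. The fix is simply to treat \emph{every} edge inside $Y_F$, whether old or newly added, by your last construction (take $f\in\CO_k(G[X])$ with $f(X_F)=\{1\}$ and colour $Y_F$ from $\{2,\dots,k\}$, repeating only on $y_1,y_2$). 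For $G_2$ you omit the case of a new edge $e=vy_0$ with $y_0\in Y_F$; here take any $g\in\CO_k(G[Y])$, so $g$ is injective on $Y_F$ by~(a), and set $g(v)=g(y_0)$. With these two small patches the proof is complete.
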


A particular nice proof of this result is due to T. Gallai (oral communication to the second author). Recall that the {\em clique number} of a graph $G$, denoted by $\om(G)$, is the largest cardinality of a clique in $G$. A graph $G$ is {\em perfect} if every induced subgraph $H$ of $G$ satisfies $\cn(H)=\om(H)$. For the proof of the next lemma, due to Gallai, we use the fact that complements of bipartite graphs are perfect.

\begin{lemma}
Let $H$ be a graph and let $k\geq 3$ be an integer. Suppose that $(A,B,F')$ is an edge cut of $H$ such that $|F'|\leq k$ and $A$ as well as $B$ are cliques of $H$ with $|A|=|B|=k$. If $\cn(H)\geq k+1$, then $|F'|=k$ and $F'=\bH(\{v\})$ for some vertex $v$ of $H$.
\label{Le:perfect}
\end{lemma}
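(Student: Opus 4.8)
The plan is to use the remark recorded just before the statement, namely that complements of bipartite graphs are perfect. Since $A$ and $B$ partition $V(H)$ and each of $A$, $B$ is a clique of $H$, every non-adjacent pair of vertices of $H$ has one end in $A$ and one end in $B$; hence the complement $\overline H$ is bipartite with bipartition $(A,B)$, and $H=\overline{\overline H}$ is the complement of a bipartite graph, so $H$ is perfect. Therefore $\om(H)=\cn(H)\geq k+1$, and $H$ contains a clique $K$ with $|K|=k+1$. Because $|A|=|B|=k<k+1$, this clique must meet both sides of the cut; put $A'=K\cap A$, $B'=K\cap B$ and $a=|A'|$, $b=|B'|$, so that $a,b\geq 1$ and $a+b=k+1$.

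Next I would count the cut edges forced by $K$. Every vertex of $A'$ is joined in $H$ to every vertex of $B'$, and each such edge has one end in $A$ and one in $B$, hence lies in $F'=\bH(A)$; distinct pairs yield distinct edges, so $ab\leq |F'|\leq k$. On the other hand, among integers $a,b\geq 1$ with $a+b=k+1$ the product $ab$ is at least $k$, with equality only when $\min\{a,b\}=1$ (equivalently, the inequality $a(k+1-a)\leq k$ rearranges to $(a-1)(a-k)\geq 0$, which together with $1\leq a\leq k$ forces $a\in\{1,k\}$). Hence $ab=k$, and after possibly interchanging the roles of $A$ and $B$ we may assume $a=1$ and $b=k$, i.e. $A'=\{v\}$ for a single vertex $v\in A$ and $B'=B$.

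Finally, $v$ is adjacent in $H$ to all $k$ vertices of $B$, and these $k$ distinct edges all belong to $F'$; since $|F'|\leq k$ they exhaust $F'$, so $|F'|=k$ and $F'$ consists precisely of the edges joining $v$ to $B$ — that is, every edge of $F'$ is incident with the one vertex $v$, which is the claimed conclusion $F'=\bH(\{v\})$. I do not expect a serious obstacle: the entire lemma rests on the single observation that $H$ is a complement of a bipartite graph and therefore perfect, after which it reduces to elementary counting. The only step demanding any care is the extremal analysis of $ab$ under the constraint $a+b=k+1$, which pins down the split $\{a,b\}=\{1,k\}$ of the $(k+1)$-clique across the bipartition.
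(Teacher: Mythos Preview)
Your proof is correct and follows essentially the same route as the paper's: both arguments observe that $H$ is the complement of a bipartite graph and hence perfect, extract a $(k+1)$-clique, split it as $a+b=k+1$ across the cut, and bound $ab\le |F'|\le k$ to force $\{a,b\}=\{1,k\}$. The only cosmetic difference is that the paper argues the extremal step via concavity of $s(k+1-s)$ on $[1,k]$, whereas you factor $(a-1)(a-k)\ge 0$; these are equivalent. Your final identification of $F'$ with the $k$ edges from $v$ to $B$ matches the paper's conclusion verbatim.
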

\begin{proof}
The graph $H$ is perfect and so $\om(H)=\cn(H)\geq k+1$. Consequently, $H$ contains a clique $X$ with $|X|=k+1$. Let $s=|A\cap X|$ and hence $k+1-s=|B\cap X|$. Since $|A|=|B|=k$, this implies that $s\geq 1$ and $k+1-s\geq 1$. Since $X$ is a clique of $H$, the set $E'$ of edges of $H$ joining a vertex of $A\cap X$ with a vertex of $B\cap X$ satisfies $E'\subseteq F'$ and $|E'|=s(k+1-s)$. Clearly, $g''(s)=-2$, which implies that the function $g(s)=s(k+1-s)$ is strictly concave on the real interval $[1,k]$. Since $g(1)=g(k)=k$, we conclude that $g(s)>k$ for all $s\in (1,k)$. Since $g(s)=|E'|\leq |F'|\leq k$, this implies that $s=1$ or $s=k$. In both cases we obtain that $|E'|=|F'|=k$, and hence $E'=F'=\bH(\{v\})$ for some vertex $v$ of $H$.
\end{proof}

Based on Lemma~\ref{Le:perfect} it is easy to give a proof of Theorem~\ref{Th:Toft}, see also the paper by Dirac, S{\o}rensen, and Toft \cite{DiracT74}. Theorem~\ref{Th:Toft} is a reformulation of a result by Toft in his Ph.D thesis. Toft gave a complete characterization of the class of critical graphs, having chromatic number $k+1$ and containing a separating edge set of size $k$. The characterization involves critical hypergraphs.

Figure~\ref{Fig:A1} shows three critical graphs with $\cn=4$. The first graph is an odd wheel and the second graph is the Haj\'os join of two $K_4$'s; both graphs belong to the class $\cC_3$. The third graph does not belong to $\cC_3$; it has an separating edge set of size 3, but $\la=4$.

\begin{figure}[htbp]
\centering
% Use the relevant command for your figure-insertion program
% to insert the figure file.
% For example, with the option graphics use
\includegraphics[height=3cm]{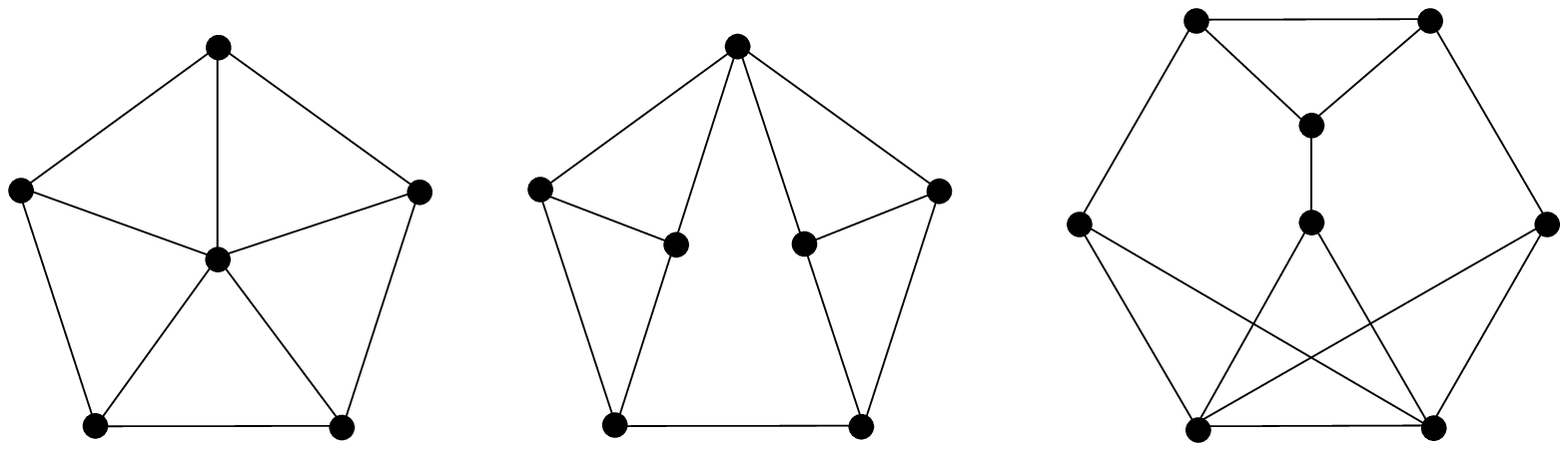}
%
% If not, use
%\picplace{5cm}{2cm} % Give the correct figure height and width in cm
%
\caption{Three critical graphs with chromatic number $\cn=4$.}
\label{Fig:A1}       % Give a unique label
\end{figure}

\section{Proof of the main result}

\begin{theorem}
Let $k\geq 0$ be an integer. Then the two graph classes $\cC_k$ and $\cH_k$ coincide.
\label{Th:Ck=Hk}
\end{theorem}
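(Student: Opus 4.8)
The plan is to prove the two inclusions $\cH_k \subseteq \cC_k$ and $\cC_k \subseteq \cH_k$ separately, and to dispose of the small cases $k \leq 2$ first, where both classes equal $\cB_k$ by definition and by Theorem~\ref{Th:Dirac}(b). Also the case $k=3$ should be handled (or cited from Alboulker \emph{et al.}), so the real work is for $k \geq 4$, where $\cH_k$ is generated from $K_{k+1}$ under Haj\'os joins.

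For $\cH_k \subseteq \cC_k$ I would argue by induction on the number of Haj\'os joins used to build a graph $G \in \cH_k$. The base case $G = K_{k+1}$ is clear: it is critical with $\cn = k+1$ and every pair of vertices has local edge connectivity exactly $k$, so $\la(K_{k+1}) = k$ and $K_{k+1} \in \cC_k$. For the inductive step, write $G = G_1 \bigtriangleup G_2$ with $G_1, G_2 \in \cH_k$, hence by induction $G_1, G_2 \in \cC_k$. By Theorem~\ref{Th:Hajos}, $G$ is critical with $\cn(G) = k+1$. It remains to check $\la(G) \leq k$, i.e., that no two vertices are joined by $k+1$ edge-disjoint paths; equivalently, by Menger, that every pair $v,w$ is separated by an edge cut of size $\leq k$. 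I would use the structure of the Haj\'os join: $G$ has a separating set consisting of the identified vertex together with the new edge $w_1w_2$ (as noted before Theorem~\ref{Th:2Sep=Hajos}), and more usefully, deleting the $\le k$ edges at a suitably chosen vertex of $G_i$ (using $\de(G_i) = k$ somewhere, or rather the edge-connectivity bound $\la_{G_i}(x,y) \le k$ from the induction hypothesis) yields a cut in $G$ of size $\le k$ for any prescribed pair. The bookkeeping here — tracking how an edge cut of $G_1$ or $G_2$ of size $\le k$ transfers across the identification and edge-addition/deletion to give one in $G$ — is routine but must be done carefully for the various positions of $v$ and $w$ (both in $G_1$, both in $G_2$, one in each, one equal to the identified vertex, one incident to the new edge).

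For the reverse inclusion $\cC_k \subseteq \cH_k$, which is the substantive direction, I would take $G \in \cC_k$ with $k \geq 4$ and induct on $|V(G)|$. Since $G$ is critical with $\cn(G) = k+1$ and $\la(G) \leq k$, Theorem~\ref{Th:Dirac}(d) forces $\la_G(v,w) = k$ for every pair, so $G$ is $k$-edge-connected but \emph{not} $(k+1)$-edge-connected — in particular $G$ has a separating edge set of size exactly $k$, and $G \neq K_{k+1}$ would then have a nontrivial structure. Actually the cleaner route: if $G = K_{k+1}$ we are done (it is in $\cH_k$); otherwise $G$ has a separating edge set $F$ of size $\le k$, and by Theorem~\ref{Th:Toft}, $|F| = k$ with an edge cut $(X,Y,F)$ and associated critical graphs $G_1$ (on $X \cup Y_F$, with $Y_F$ a clique) and $G_2$ (on $Y$ plus a new vertex joined to $Y_F$). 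I would show $|Y_F| = k$: by Theorem~\ref{Th:Toft}(a) every $k$-coloring of $G[Y]$ uses $k$ distinct colors on $Y_F$, so $|Y_F| \geq k$; and $|Y_F| \le |F| = k$ since the edges of $F$ incident to $Y$ have their $Y$-ends in $Y_F$ and $G$ is $k$-edge-connected — wait, I need $|Y_F| \le k$, which follows since each vertex of $Y_F$ meets $F$ and $|F| = k$, but a vertex could meet several edges of $F$; here I would invoke that $G$ has no multiple edges and a counting/degree argument, or better, use that $Y_F$ becomes a clique in $G_1$ of size $|Y_F|$ and $\cn(G_1) = k+1$ so $|Y_F| \le k$ (a clique of size $k+1$ in a $(k+1)$-chromatic critical graph would be the whole graph). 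Then $F$ is a perfect matching between $X_F$ and $Y_F$ with $|X_F| = |Y_F| = k$, and both $G_1, G_2$ lie in $\cC_k$ (criticality from Theorem~\ref{Th:Toft}; $\la \le k$ inherited since an edge cut of size $\le k$ in $G_1$ or $G_2$ pulls back to one in $G$, using that $Y_F$ is a clique / the new vertex has degree $k$). By induction $G_1, G_2 \in \cH_k$, and I would finally identify $G$ as the Haj\'os join $G_1 \bigtriangleup G_2$: delete an edge inside the clique $Y_F$ of $G_1$ and the corresponding edge at the new vertex of $G_2$, identify the new vertex of $G_2$ with one endpoint, and the matching edge of $F$ plays the role of $w_1w_2$. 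Here Theorem~\ref{Th:2Sep=Hajos} and Lemma~\ref{Le:perfect}-style perfect-graph reasoning are the tools.

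The main obstacle I anticipate is the last step of the reverse direction: showing that the Toft decomposition of $G$ is \emph{exactly} a Haj\'os join and not merely a Dirac--Gallai $2$-separation, which requires pinning down $|X_F| = |Y_F| = k$ and that $F$ is a perfect matching, and then reconciling the graph $G_1$ produced by Theorem~\ref{Th:Toft} (in which $Y_F$ is artificially completed to a clique) with the factor needed for the Haj\'os construction. Relatedly, I must ensure that when I apply the inductive hypothesis to $G_1$ and $G_2$ I genuinely have $\la(G_i) \le k$; the risk is that completing $Y_F$ to a clique in $G_1$ could create a pair of vertices with $k+1$ edge-disjoint paths. Verifying $\la(G_1) \le k$ — presumably by checking that the clique $Y_F$ together with the cut $F$ still admits a size-$k$ edge cut separating any two vertices, or by a direct Menger argument transferring cuts from $G$ — is where the genuine care is needed, and symmetrically for $G_2$ via the degree-$k$ added vertex. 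Everything else is induction plus invocation of the quoted structural theorems.
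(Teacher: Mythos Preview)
Your plan for $\cH_k \subseteq \cC_k$ is fine and matches the paper's Claim~2. The small cases $k\le 2$ are also handled correctly.

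The reverse inclusion $\cC_k \subseteq \cH_k$ is where your proposal diverges from the paper and contains a genuine gap. You want to apply Toft's edge-cut decomposition (Theorem~\ref{Th:Toft}) to an arbitrary $G \in \cC_k$ with $G \neq K_{k+1}$, obtain the two critical graphs $G_1$ (on $X \cup Y_F$ with $Y_F$ completed to a clique) and $G_2$ (on $Y$ plus a new apex vertex), and then recognise $G$ itself as the Haj\'os join $G_1 \bigtriangleup G_2$. This identification cannot work: a Haj\'os join of $G_1$ and $G_2$ has $|V(G_1)| + |V(G_2)| - 1 = (|X| + k) + (|Y| + 1) - 1 = |V(G)| + k$ vertices, not $|V(G)|$. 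The Toft pieces overlap $G$ far more than the single shared vertex of a Haj\'os factorisation allows, so no choice of deleted edges and identified vertex recovers $G$. Your claim that $|X_F| = k$ and that $F$ is a perfect matching is also unjustified: Theorem~\ref{Th:Toft}(a) only says $|f(X_F)| = 1$ for every $k$-colouring $f$ of $G[X]$, which constrains colourings, not $|X_F|$; in fact $|X_F|$ may be any value from $2$ to $k$.

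The paper's route supplies exactly the idea you are missing: it case-splits on \emph{vertex} connectivity rather than edge connectivity. If $G\in\cC_k$ has a separating vertex set $\{u,v\}$ of size~$2$, one argues (Claim~5) that $G$ must in fact have a separating set consisting of one vertex and one edge, and hence is a Haj\'os join by Theorem~\ref{Th:2Sep=Hajos}; the key step is that $G_1'=G_1+uv$ critical forces $\la_{G_1}(u,v)\ge k-1$, so $\la_{G_2}(u,v)\le 1$ and $G_2$ has a bridge. Claim~2 then puts both Haj\'os factors back in $\cC_k$ and induction finishes. If instead $G$ is $3$-connected, the paper shows (Claim~4) that $G$ must be $K_{k+1}$ (or an odd wheel when $k=3$): Gallai's Theorem~\ref{Th:Gallai} forces at least two high vertices $u,v$, the size-$k$ cut separating them is fed into Toft's theorem, and the resulting $G_1$ is shown to be a strictly smaller $3$-connected member of $\cC_k$ that is still not $K_{k+1}$ or an odd wheel---contradicting minimality of $G$. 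In other words, Toft's theorem is used \emph{inside a reductio} in the $3$-connected case, not to exhibit a Haj\'os factorisation. The verification that $\la(G_1)\le k$ after completing $Y_F$ to a clique (your stated worry) is done there by rerouting path systems through $G[Y]$, using $k$ edge-disjoint paths from a vertex of $Y\setminus Y_F$ to the $k$ vertices of $Y_F$.
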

\begin{proof}
That the two classes $\cC_k$ and $\cH_k$ coincide if $0\leq k \leq 2$ follows from Theorem~\ref{Th:Dirac}(a). In this case both classes consists of all critical graphs with chromatic number $k+1$. In what follows we therefore assume that $k\geq 3$. The proof of the following claim is straightforward and left to the reader.

\begin{claim}
The odd wheels belong to the class $\cC_3$ and the complete graphs of order $k+1$ belong to the class $\cC_k$.
\label{Cl:A1}
\end{claim}

\begin{claim}
Let $k\geq 3$ be an integer, and let $G=G_1 \bigtriangleup G_2$ the Haj\'os join of two graphs $G_1$ and $G_2$. Then $G$ belongs to the class $\cC_k$ if and only if both $G_1, G_2$ belong to the class $\cC_k$.
\label{Cl:A2}
\end{claim}
\pfcl{We may assume that $G=(G_1,v_1,w_1) \bigtriangleup (G_2,v_2,w_2)$ and $v$ is the vertex of $G$ obtained by identifying $v_1$ and $v_2$. First suppose that $G_1, G_2\in \cC_k$. From Theorem~\ref{Th:Hajos} it follows that $G$ is critical and has chromatic number $k+1$. So it suffices to prove that $\la(G)\leq k$. To this end let $u$ and $u'$ be distinct vertices of $G$ and let $p=\la_G(u,u')$. Then there is a system $\cP$ of $p$ edge disjoint $u$-$u'$ paths in $G$. If $u$ and $u'$ belong both to $G_1$, then only one path $P$ of $\cP$ may contain vertices not in $G_1$. In this case $P$ contains the vertex $v$ and the edge $w_1w_2$. If we replace in $P$ the subpath $vPw_1$ by the edge $v_1w_1$, we obtain a system of $p$ edge disjoint $u$-$u'$ paths in $G_1$, and hence $p\leq \la_{G_1}(u,u')\leq k$. If $u$ and $u'$ belong to $G_2$, a similar argument shows that $p\leq k$. It remains to consider the case that one vertex, say $u$, belongs to $G_1$ and the other vertex $u'$ belongs to $G_2$. By symmetry we may assume that $u\not=v$. Again at most one path $P$ of $\cP$ uses the edge $w_1w_2$ and the remaining paths of $\cP$ all uses the vertex $v(=v_1=v_2)$. If we replace $P$ by the path $uPw_1+w_1v_1$, then we obtain $p$ edge disjoint $u$-$v_1$ path in $G_1$, and hence $p\leq \la_{G_1}(u,v_1)\leq k$. This shows that $\la(G)\leq k$ and so $G\in \cC_k$.

Suppose conversely that $G\in \cC_k$. From Theorem~\ref{Th:Hajos} it follows that $G_1$ and $G_1$ are critical graphs, both with chromatic number $k+1$. So it suffices to show that $\la(G_i)\leq k$ for $i=1,2$. By symmetry it suffices to show that $\la(G_1)\leq k$. To this end let $u$ and $u'$ be distinct vertices of $G_1$ and let $p=\la_G(u,u')$. Then there is a system $\cP$ of $p$ edge disjoint $u$-$u'$ paths in $G_1$. At most one path $P$ of $\cP$ can contain the edge $v_1w_1$. Clearly, there is a $v_2$-$w_2$ path $P'$ in $G_2$ not containing the edge $v_2w_2$. So if we replace the edge $v_1w_1$ of $P$ by the path $P'$, we get $p$ edge disjoint $u$-$u'$ paths of $G$, and hence $p\leq \la_G(u,u')\leq k$. This shows that $\la(G_1)\leq k$ and by symmetry $\la(G_2)\leq k$. Hence $G_1, G_2\in \cC_k$.
}

As a consequence of Claim~\ref{Cl:A1} and Claim~\ref{Cl:A2} and the definition of the class $\cH_k$ we obtain the following claim.

\begin{claim}
Let $k\geq 3$ be an integer. Then the class $\cH_k$ is a subclass of $\cC_k$.
\label{Cl:A3}
\end{claim}

\begin{claim}
Let $k\geq 3$ be an integer, and let $G$ be a graph belonging to the class $\cC_k$. If $G$ is 3-connected, then either $k=3$ and $G$ is an odd wheel, or $k\geq 4$ and $G$ is a complete graph of order $k+1$.
\label{Cl:A4}
\end{claim}
\pfcl{The proof is by contradiction, where we consider a counterexample $G$ whose order $|G|$ is minimum. Then $G\in \cC_k$ is a 3-connected graph, and either $k=3$ and $G$ is not an odd wheel, or $k\geq 4$ and $G$ is not a complete graph of order $k+1$. First we claim that $|G_H|\geq 2$. If $G_H=\ems$, then Theorem~\ref{Th:Gallai}(b) implies that $G$ is a complete graph of order $k+1$, a contradiction. If $|G_H|=1$, then Theorem~\ref{Th:Gallai}(c) implies that $k=3$ and $G$ is an odd wheel, a contradiction. This proves the claim that $|G_H|\geq 2$. Then let $u$ and $v$ be distinct high vertices of $G$. Since $G\in \cC_k$, Theorem~\ref{Th:Dirac}(d) implies that $\la_G(u,v)=k$ and, therefore, $G$ contains a separating edge set $F$ of size $k$ which separates $u$ and $v$. From Theorem~\ref{Th:Toft} it then follows that there is an edge cut $(X,Y,F)$ satisfying the three properties of that theorem. Since $F$ separates $u$ and $v$, we may assume that $u\in X$ and $v\in Y$. By Theorem\ref{Th:Toft}(a), $|Y_F|=k$ and hence each vertex of $Y_F$ is incident to exactly one edge of $F$. Since $Y$ contains the high vertex $v$, we conclude that $|Y_F|<|Y|$. Now we consider the graph $G'$ obtained from $G[X \cup Y_F]$ by adding all edges between the vertices of $Y_F$, so that $Y_F$ becomes a clique of $G'$. By Theorem~\ref{Th:Toft}(b), $G'$ is a critical graph with chromatic number $k+1$. Clearly, every vertex of $Y_F$ is a low vertex of $G$ and every vertex of $X$ has in $G'$ the same degree as in $G$. Since $X$ contains the high vertex $u$ of $G$, this implies that $|X_F|<|X|$. Since $G$ is 3-connected, we conclude that $|X_F|\geq 3$ and that $G'$ is 3-connected.

Now we claim that $\la(G')\leq k$. To prove this, let $x$ and $y$ be distinct vertices of $G'$. If $x$ or $y$ is a low vertex of $G'$, then $\la_{G'}(x,y)\leq k$ and there is nothing to prove. So assume that both $x$ and $y$ are high vertices of $G'$. Then both vertices $x$ and $y$ belong to $X$. Let $p=\la_{G'}(x,y)$ and let $\cP$ be a system of $p$ edge disjoint $x$-$y$ paths in $G'$. We may choose $\cP$ such that the number of edges in $\cP$ is minimum.
Let $\cP_1$ be the paths in $\cP$ which uses edges of $F$. Since $|Y_F|=k$ and each vertex of $Y_F$ is incident with exactly one edge of $F$, this implies that each path $P$ in $\cP_1$ contains exactly two edges of $F$. Since $|X_F|<|X|$ and $|Y_F|<|Y|$, there are vertices $u'\in X\sm X_F$ and $v'\in Y\sm Y_F$. By Theorem~\ref{Th:Dirac}(d) it follows that $\la_G(u',v')=k$ and, therefore, there are $k$ edge disjoint $u'$-$v'$ paths in $G$. Since $|Y_F|=k$, for each vertex $z\in Y_F$, there is a $v'$-$z$ path $P_z$ in $G[Y]$ such that these paths are edge disjoint. Now let $P$ be an arbitrary path in $\cP_1$. Then $P$ contains exactly two vertices of $Y_F$, say $z$ and $z'$, and we can replace the edge
$zz'$ of the path $P$ by a $z$-$z'$ path contained in $P_z \cup P_{z'}$. In this way we obtain a system of $p$ edge disjoint $x$-$y$ paths in $G$, which implies that $p\leq \la_G(x,y)\leq k$. This proves the claim that $\la(G')\leq k$. Consequently $G'\in \cC_k$. Clearly, $|G'|<|G|$ and either $k=3$ and $G'$ is not an odd wheel, or $k\geq 4$ and $G$ is not a complete graph of order $k+1$. This, however, is a contradiction to the choice of $G$. Thus the claim is proved.
}

\begin{claim}
Let $k\geq 3$ be an integer, and let $G$ be a graph belonging to the class $\cC_k$. If $G$ has a separating vertex set of size 2, then $G=G_1\bigtriangleup G_2$ is the Haj\'os sum of two graphs $G_1$ and $G_2$, which both belong to $\cC_k$.
\label{Cl:A5}
\end{claim}
\pfcl{If $G$ has a separating set consisting of one edge and one vertex, then Theorem~\ref{Th:2Sep=Hajos} implies that $G$ is the Hajo\'s join of two graphs $G_1$ and $G_2$. By Claim~\ref{Cl:A2} it then follows that both $G_1$ and $G_2$
belong to $\cC_k$ and we are done. It remains to consider the case that $G$ does not contain a separating set consisting of one edge and one vertex. By assumption, there is a separating vertex set of size 2, say $S=\{u,v\}$. Then Theorem~\ref{Th:2Conn} implies that $G-S$ has exactly two components $H_1$ and $H_2$ such that the graphs $G_i=G[V(H_i) \cup S]$ with $i=1,2$ satisfies the three properties of that theorem. In particular, we have that $G_1'=G_1+uv$ is critical and has chromatic number $k$. By Theorem~\ref{Th:Dirac}(c), it then follows that $\la_{G_1'}(u,v)\geq k$ implying that $\la_{G_1}(u,v)\geq k-1$. Since $G\in \cC_k$, we then conclude that $\la_{G_2}(u,v)\leq 1$. Since $G_2$ is connected, this implies that $G_2$ has a bridge $e$. Since $k\geq 3$, we conclude that $\{u,e\}$ or $\{v,e\}$ is a separating set of $G$, a contradiction.
}

As a consequence of Claim~\ref{Cl:A4} and Claim~\ref{Cl:A5}, we conclude that the class $\cC_k$ is a subclass of the class $\cH_k$. Together with Claim~\ref{Cl:A3} this yields $\cH_k=\cC_k$ as wanted.
\end{proof}

\pff{of Theorem~\ref{Th:local}}{For the proof of this theorem let $G$ be a non-empty graph with $\la(G)=k$. By (\ref{Equ:lambda}) we obtain that $\cn(G)\leq k+1$. If one block $H$ of $G$ belongs to $\cH_k$, then $H\in \cC_k$ (by Theorem~\ref{Th:Ck=Hk}) and hence $\cn(G)=k+1$ (by (\ref{Equ:Block}).

Assume conversely that $\cn(G)=k+1$. Then $G$ contains a subgraph $H$ which is critical and has chromatic number $k+1$. Clearly, $\la(H)\leq \la(G)\leq k$, and, therefore, $H\in \cC_k$. By Theorem~\ref{Th:Dirac}(b), $H$ contains no separating vertex. We claim that $H$ is a block of $G$. For otherwise, $H$ would be a proper subgraph of a block $G'$ of $G$. This implies that there are distinct vertices $u$ and $v$ in $H$ which are joined by a path $P$ of $G$ with $E(P)\cap E(H)=\ems$. Since $\la_H(u,v)\geq k$ (by Theorem~\ref{Th:Dirac}(c)), this implies that $\la_G(u,v)\geq k+1$, which is impossible. This proves the claim that $H$ is a block of $G$. By Theorem~\ref{Th:Ck=Hk}, $\cC_k=\cH_k$ implying that $H\in \cH_k$. This completes the proof of the theorem}

\medskip

The case $\la=3$ of Theorem~\ref{Th:local} was obtained earlier by Alboulker {\em et al.\,} \cite{AlboukerV2016}; their proof is similar to our proof. Let $\cL_k$ denote the class of graphs $G$ satisfying $\la(G)\leq k$. It is well known that membership in $\cL_k$ can be tested in polynomial time. It is also easy to show that there is a polynomial-time algorithm that, given a graph $G\in \cL_k$, decides whether $G$ or one of its blocks belong to $\cH_k$. So it can be tested in polynomial time whether a graph $G\in \cL_k$ satisfies $\cn(G)\leq k$.
Moreover, the proof of Theorem~\ref{Th:local} yields a polynomial-time algorithm that, given a graph $G\in \cL_k$, finds a coloring of $\CO_k(G)$ when such a coloring exists.
This result provides a positive answer to a conjecture made by Alboulker {\em et al.\,} \cite[Conjecture 1.8]{AlboukerV2016}. The case $k=3$ was solved by Alboulker {\em et al.\,} \cite{AlboukerV2016}.

\begin{theorem}
For fixed $k\geq 1$, there is a polynomial-time algorithm that, given a graph $G\in \cL_k$, finds a coloring in $\CO_k(G)$ or a block belonging to $\cH_k$.
\label{Th:Algorithm}
\end{theorem}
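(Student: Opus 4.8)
The plan is to run the proof of Theorem~\ref{Th:Ck=Hk} ``backwards'' as a recursive procedure $\textsc{Solve}(H)$ which, for any $H\in\cL_k$, returns either a coloring in $\CO_k(H)$ or a block of $H$ that belongs to $\cH_k$; applied to $G$ this is exactly what is wanted, since any member of $\cH_k$ occurring as a subgraph of a graph in $\cL_k$ is automatically one of its blocks. Indeed, such a member $D$ is critical with $\cn(D)=k+1$ by Theorem~\ref{Th:Ck=Hk}, hence $2$-connected and $k$-edge-connected by Theorem~\ref{Th:Dirac}(c),(d); were $D$ properly contained in a $2$-connected subgraph, the fan lemma would produce a pair of vertices of $D$ joined by an extra path edge-disjoint from $E(D)$, giving $k+1$ edge-disjoint paths between them and contradicting $\la\le k$. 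Every step will be polynomial: block decomposition, local edge connectivities and minimum edge cuts (hence testing membership in $\cL_k$), and testing membership in $\cH_k$ (polynomial by repeatedly peeling off Haj\'os joins along vertex-plus-edge separators via Theorems~\ref{Th:2Sep=Hajos}, \ref{Th:Hajos} and Claim~\ref{Cl:A2}, reducing to $3$-connected pieces to which Claim~\ref{Cl:A4} applies) are all polynomial, and each recursive call strictly decreases $|V(H)|+|E(H)|$ while spawning only a bounded number of subcalls.

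The recursion mirrors the case analysis of the proof. Using (\ref{Equ:Block}) one first reduces to a single block: if $H$ is disconnected or has a cut vertex, write $H=H_1\cup H_2$ with $H_1,H_2\subsetneq H$ meeting in at most one vertex, recurse on each (both are in $\cL_k$, and by the remark above neither contains a member of $\cH_k$ as a subgraph unless it equals $H_i$), then either pass up a returned $\cH_k$-block or glue the two colorings after permuting colors on $H_2$ at the shared vertex. While $\de(H)\le k-1$, delete a minimum-degree vertex $v$, recurse on $H-v$, and extend greedily to $v$ (or pass up an $\cH_k$-block, still a block of $H$). What remains is $H$ $2$-connected with $\de(H)=k$, because $\de(H)\le\ka(H)\le\la(H)\le k$ by Mader's inequality.

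For this core case I would branch on connectivity as in Claims~\ref{Cl:A4} and~\ref{Cl:A5}. If $H$ is $3$-connected, check whether $H=K_{k+1}$ (when $k\ge4$) or $H$ is an odd wheel (when $k=3$) and return this block if so; otherwise $\cn(H)\le k$ by the contrapositive of Claim~\ref{Cl:A4}, and one must actually produce a $k$-coloring. When $H$ is $k$-regular this is precisely the setting of Brooks' theorem (Theorem~\ref{Th:Brooks}) and its classical constructive proof $k$-colors $H$; when $H$ is not $k$-regular, a vertex of degree exceeding $k\ge\la(H)$ forces a small edge cut, along which $H$ decomposes in the manner of Theorem~\ref{Th:Toft} into strictly smaller members of $\cL_k$ on which to recurse. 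If instead $H$ has a $2$-separator, pick a minimal one $S=\{u,v\}$, split the components of $H-S$ into two nonempty groups, and put $H_i=H[V(\text{group }i)\cup S]$; since $S$ is minimal every component of $H-S$ meets both $u$ and $v$, so each group carries a $u$--$v$ path and consequently all of $H_i$ and $H_i+uv$ remain in $\cL_k$. As $\cn(H)\le k$, exactly one of ``$u=v$'' or ``$u\ne v$'' need be realizable in $H_1$ and $H_2$ simultaneously; recursing on $H_1,H_2$ and on $H_1+uv,H_2+uv$ (to test whether $u\ne v$ is realizable), together with Theorems~\ref{Th:2Conn} and~\ref{Th:2Sep=Hajos}, lets one choose the right relation and recombine the colorings, reinterpreting any $\cH_k$-block returned for some $H_i+uv$ as the certificate ``$u\ne v$ is impossible in $H_i$'' rather than as an obstruction of $H$.

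The hard part is the bookkeeping in this last stage. One must check carefully that every decomposition produces graphs that are strictly smaller \emph{and} still lie in $\cL_k$, that none of their blocks lies in $\cH_k$ unless the parent graph genuinely fails to be $k$-colorable, and that the colorings reassemble while any $\cH_k$-block is either lifted to a true block of $G$ or used only as local information; if no $k$-coloring is found anywhere, Theorem~\ref{Th:local} guarantees that the membership test will locate an $\cH_k$-block among the blocks of $G$. I expect the most delicate point to be the $3$-connected non-regular case: there one must colour a graph that is known to be $k$-colorable only abstractly, and push the Toft-type decomposition of Theorem~\ref{Th:Toft} (or a direct argument for the small ``cone-like'' graphs that can occur) through for graphs that need not themselves be critical.
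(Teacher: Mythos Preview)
Your outline tracks the decomposition in Theorem~\ref{Th:Ck=Hk} faithfully, but the place you flag as ``most delicate'' is a genuine gap, not just bookkeeping. After stripping low-degree vertices you reach a $2$-connected $H\in\cL_k$ with $\de(H)=k$; when $H$ is $3$-connected, not $K_{k+1}$, and not an odd wheel, you know abstractly that $\cn(H)\le k$ and must now produce a colouring. Your plan --- use a high-degree vertex to find a small separating edge cut and recurse \`a la Theorem~\ref{Th:Toft} --- needs \emph{two} high vertices to yield a non-trivial cut: if $u$ is the unique vertex of degree $>k$, then for every other vertex $w$ the minimum $u$--$w$ cut may simply be $\bH(\{w\})$, which gives no decomposition. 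Even wheels for $k=3$, and cones over $(k-1)$-regular graphs for $k\ge4$, already exhibit this: every edge cut of size $k$ is trivial and your recursion stalls. The paper closes this gap by branching on the number $p$ of high vertices: for $p\le1$ it deletes a maximum-degree vertex, precolours it, and applies the \emph{list} (degree) version of Brooks' theorem to the remainder; this is the missing idea on that branch.

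For $p\ge2$ your intuition (cut between two high vertices) is correct, but Theorem~\ref{Th:Toft} is stated for critical graphs and does not by itself tell you how to merge $k$-colourings of the two sides. The paper's device is Lemma~\ref{Le:perfect}: regard the $k$ colours on each side of the cut as the vertices of two $k$-cliques joined by at most $k$ cross-edges; this auxiliary graph is the complement of a bipartite graph, hence perfect, and the lemma says that unless all cross-edges hit a single colour on one side, a permutation of colours on one side kills every conflict. The auxiliary graphs $G_1,G_2$ in which $Y_F$ (resp.\ $X_F$) is turned into a clique are introduced precisely to rule out that single-colour degenerate case. A smaller point: ``bounded number of subcalls, each strictly smaller'' only gives an exponential bound; you need that the subcall sizes essentially add up to the parent size, as they do in the Haj\'os-split and edge-cut branches of the paper, and the paper's use of vertex-plus-edge separators (rather than arbitrary $2$-separators) together with a direct $\cH_k$-membership test is what keeps this sum under control.
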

\skpf{The Theorem is evident if $k=1,2$; and the case $k=3$ was solved by Alboulker {\em et al.\,} \cite{AlboukerV2016}. Hence we assume that $k\geq 4$ and $G\in \cL_k$. If we find for each block $H$ of $G$ a coloring in $\CO_k(H)$, we can piece these colorings together by permuting colors to obtain a coloring in $\CO_k(G)$. Hence we may assume that $G$ is a block. First, we check whether $G$ has a separating set $S$ consisting of one vertex and one edge. If we find such a set, say $S=\{v,e\}$ with $v\in V(G)$ and $e\in E(G)$. Then $G-e$ is the union of two connected graphs $G_1$ and $G_2$ having only vertex $v$ in common where $e=w_1w_2$ and $w_i\in V(G_i)$ for $i=1,2$. Both blocks $G_1'=G_1+vw_1$ and $G_2'=G_2+vw_2$ belong to $\cL_k$. Now we check whether these blocks belong to $\cH_k$. If both blocks $G_1'$ and $G_2'$ belong to $\cH_k$, then $vw_i\not\in E(G_i)$ for $i=1,2$, and hence $G$ belongs to $\cH_k$ and we are done. If one of the blocks, say $G_1'$ does not belong to $\cH_k$, we can construct a coloring $f_1\in \CO_k(G_1')$. Moreover, no block of $G_2$ belongs to $\cH_k$, hence we can construct a coloring $f_2\in \CO_k(G_2)$. Then $f_1\in \CO_k(G_1)$ and $f_1(v)\not=f_1(w_1)$. Since $k\geq 4$, we can permute colors in $f_2$ such that $f_1(v)=f_2(v)$ and $f_1(w_1)\not=f_2(w_2)$. Consequently, $f=f_1 \cup f_2$ belongs to $\CO_k(G)$ and we are done.

It remains to consider the case that $G$ contains no separating set consisting of one vertex and one edge. Then let $p$ denote the number of vertices of $G$ whose degree is greater that $k$. If $p\leq 1$, then let $v$ be a vertex of maximum degree in $G$. Color $v$ with color $1$ and let $L$ be a list assignment for $H=G-v$
satisfying $L(u)=\{2,3, \ldots ,k\}$ if $vu\in E(G)$ and $L(u)=\{1,2, \ldots, k\}$ otherwise. Then $H$ is connected and $|L(u)|\geq d_H(u)$ for all $u\in V(H)$. Now we can use the degree version of Brooks' theorem, see \cite[Theorem 2.1]{StiebitzT2015}. Either we find a coloring $f$ of $H$ such that $f(u)\in L(u)$ for all $u\in V(H)$, yielding a coloring of $\CO_k(G)$, or $|L(u)|=d_H(u)$ for all $u\in V(H)$ and each block of $H$ is a complete graph or an odd cycle. In this case, $d_H(u)\in \{k,k-1\}$ for all $u\in V(H)$ and, since $k\geq 4$, each block of $H$ is a $K_k$ or a $K_2$. Since $G$ contains no separating set consisting of one vertex and one edge, this implies that $H=K_k$ and so $G=K_{k+1}\in \cH_k$ and we are done. If $p\geq 2$, then we choose two vertices $u$ and $u'$ whose degrees are greater that $k$. Then we construct an edge cut $(X,Y,F)$ with $u\in X$, $u'\in Y$, and $|F|=\la_G(u,u')$. We may assume that $a=|X_F|$ and $b=|Y_F|$ satisfies $a\leq b\leq k$. If $b\leq k-1$, then both graphs $G[X]$ and $G[Y]$ belong to $\cL_k$ and there are colorings $f_X\in \CO_k(G[X])$ and $f_Y\in \CO_k(G[Y])$. Note that no block of these two graphs can belong to $\cH_k$. By permuting colors in $f_Y$, we can combine the two colorings $f_X$ and $f_Y$ to obtain a coloring $f\in \CO_k(G)$ (by Lemma~\ref{Le:perfect}). If $a<b=k$, then we consider the graph $G_1$ obtained from $G[X \cup Y_F]$ by adding all edges between the vertices of $Y_F$, so that $Y_F$ becomes a clique of $G_1$. Then $G_1$ belongs to $\cL_k$ (see the proof of Claim 4) and, since $G$ contains no separating set consisting of one vertex and one edge, the block $G_1$ does not belongs to $\cH_k$. Hence there are colorings $f_1\in \CO_k(G_1)$ and $f_Y\in \CO_k(G[Y])$. Then the restriction of $f_1$ to $X$ yields a coloring $f_X\in \CO_k(G[X])$ such that $|f_X(X)|\geq 2$. By permuting colors in $f_Y$, we can combine the two colorings $f_X$ and $f_Y$ to obtain a coloring $f\in \CO_k(G)$ (by Lemma~\ref{Le:perfect}). It remains to consider the case $a=b=k$. Then let $G_2$ be the graph obtained from $G[Y \cup X_F]$ by adding all edges between the vertices of $X_F$, so that $X_F$ becomes a clique of $G_2$. Then we find colorings $f_1\in \CO_k(G_1)$ and $f_2\in \CO_k(G_2)$ and, hence, colorings $f_X\in \CO_k(G[X])$ and $f_Y\in \CO_k(G[Y])$ such that $|f_X(X)|\geq 2$ and $|f_Y(Y)|\geq 2$.  By permuting colors in $f_Y$, we can combine the two colorings $f_X$ and $f_Y$ to obtain a coloring $f\in \CO_k(G)$ (by Lemma~\ref{Le:perfect}).}


\begin{thebibliography}{99}

\bibitem{AlboukerV2016}
P.~Alboulker, N.~Brettell, F.~Havet, D.~Marx, and N.~Trotignon, Colouring graphs with constraints on connectivity, arXiv:1505.01616v1 [math.CO] 7 ay 2015.

\bibitem{Brooks41}
R.~L.~Brooks, On colouring the nodes of a network.
{\em Proc. Cambridge Philos. Soc.} {\bf 37} (1941), 194--197.

\bibitem{Dirac52}
G.~A.~Dirac,
A property of 4-chromatic graphs and some remarks on critical graphs.
{\em J. London Math. Soc.}
{\bf 27} (1952), 85--92.

\bibitem{Dirac53}
G.~A.~Dirac, The structure of $k$-chromatic graphs. {\em Fund. Math.}
{\bf 40} (1953), 42--55.

\bibitem{Dirac57}
G.~A.~Dirac, A theorem of R.~L.~Brooks and a conjecture of H.~Hadwiger.
{\em Proc. London Math. Soc.} (3) {\bf 7} (1957), 161--195.

\bibitem{Dirac64}
G.~A.~Dirac, On the structure of 5- and 6-chromatic graphs.
{\em J. Reine Angew. Math.} {\bf 214/215} (1974), 43--52.

\bibitem{DiracT74}
G.~A.~Dirac, B.~A.~S{\o}rensen, and B.~Toft, An extremal result for graphs with an application to their colorings. {\em J. Reine Angew. Math.} {\bf 268/269} (1974), 216--221.

\bibitem{Gallai63a} T.~Gallai, Kritische Graphen I.
{\em Publ. Math. Inst. Hungar. Acad. Sci.} {\bf 8}
(1963), 165-192.

\bibitem{Hajos61}
G.~Haj\'os, \"Uber eine Konstruktion nicht $n$-f\"arbbarer Graphen.
{\em Wiss. Z. Martin Luther Univ. Halle-Wittenberg, Math.-Natur. Reihe}
{\bf 10} (1961), 116--117.

\bibitem{JensenT95}
T.~Jensen and B.~Toft, Choosability versus chromaticity. {\em Geombinatorics}
{\bf 5} (1995), 45--64

\bibitem{Mader73}
W.~Mader, Grad und lokaler Zusammenhang in endlichen Graphen. {\em Math. Ann.} {\bf 205} (1973), 9--11.

\bibitem{StiebitzT2015}
M.~Stiebitz and B.~Toft, Brooks's theorem. In: L. W. Beineke and R. Wilson, eds., {\em Graph Coloring Theory}, pp. 41--65, Cambridge Press, 2015.

\bibitem{Toft70}
B.~Toft, Some contribution to the theorry of colour-critical graphs. Ph.D thesis, University of London 1970. Published as No. 14 in Various Publication Series, Matematisk Institu, Aarhus Universitet 1970.


\bibitem{Toft74}
B.~Toft, Colour-critical graphs and hypergraphs. {\em J. Combin. Thoory (B)}
{\bf 16} (1974), 145--161.

\bibitem{Toft74b}
B.~Toft, Critical subgraphs of colour critical graphs.
{\em Discrete Math.} {\bf 7} (1974), 377--392.


\end{thebibliography}
\end{document}